\documentclass[11pt]{amsart}

\usepackage{epsf}
\usepackage{geometry}
\usepackage{listings} 
\usepackage{algorithmic,algorithm}
\usepackage{multirow}
\usepackage{enumerate}
\usepackage{booktabs}
\usepackage {ulem}
\usepackage {cancel}

\usepackage{accents}
\usepackage[pdftex]{graphicx}
\usepackage{amscd}
\usepackage[pdftex]{color} % black,white,red,green,blue,cyan,magen ta,yellow
\usepackage[pdftex,colorlinks]{hyperref}
\usepackage{lscape}
\usepackage{indentfirst}
\usepackage{latexsym}
\usepackage{amsmath, amsfonts, amssymb,mathrsfs}
\usepackage{verbatim}
\usepackage{bm}

\usepackage{amsmath}
\usepackage{todonotes} 
\usepackage[extra]{tipa}

\newcommand{\vertiii}[1]{{\left\vert\kern-0.25ex\left\vert\kern-0.25ex\left\vert #1 
    \right\vert\kern-0.25ex\right\vert\kern-0.25ex\right\vert}}

\hypersetup{
    bookmarks=true,         % show bookmarks bar?
    unicode=false,          % non-Latin characters in AcrobatâÃ¯s bookmarks
    pdftoolbar=true,        % show AcrobatâÃ¯s toolbar?
    pdfmenubar=true,        % show AcrobatâÃ¯s menu?
    pdffitwindow=true,      % page fit to window when opened
    pdftitle={},    % title
    pdfauthor={Johannes Kraus},     % author
    pdfsubject={},   % subject of the document
    pdfnewwindow=true,      % links in new window
    pdfkeywords={}, % list of keywords
    colorlinks=true,       % false: boxed links; true: colored links
    linkcolor=red,          % color of internal links
    citecolor=blue,        % color of links to bibliography
    filecolor=magenta,      % color of file links
    urlcolor=cyan           % color of external links
}

\everymath{\displaystyle}
\newtheorem{theorem}{Theorem}
\newtheorem{lemma}[theorem]{Lemma}

\theoremstyle{remark}
\newtheorem{remark}{Remark}

\theoremstyle{definition}

\def\sig{{\bm \sigma}}
\def\btau{{\bm \tau}}

\def\bu{{\boldsymbol u}}
\def\bv{{\boldsymbol v}}
\def\bw{{\boldsymbol w}}
\def\bff{{\boldsymbol f}}

\def\tbx{{\bx}}
\def\tsig{\sig}
\def\tbu{{\bu}}
\def\tbw{{\bw}}
\def\tp{{p}}

\renewcommand{\div}{\operatorname{div}}

\renewcommand{\div}{\operatorname{div}}
\def\eps{\boldsymbol \epsilon}
\def\sig{\boldsymbol \sigma}

\def\P{\mathcal P}

\def\bf{\boldsymbol f}

\def\bu{\boldsymbol u}
\def\bv{\boldsymbol v}

\def\bw{\boldsymbol w}

\def\bx{\boldsymbol x}
\def\by{\boldsymbol y}
\def\bz{\boldsymbol z}

\def\bn{\boldsymbol n}

\def\divv{\text{div}}

\begin{document}

\title[Mixed-mixed conservative finite element methods for Biot's model]
{A family of mixed-mixed strongly conservative finite element methods for Biot's model of consolidation}
% in four-field formulation}

\author{Qingguo Hong}
\address{Email*: qingguohong@mst.edu\\
Department of Mathematics and Statistics, Missouri University of Science and Technology, Rolla, MO 65409, USA
}
\author{Johannes Kraus}
\address{Email: johannes.kraus@uni-due.de\\
Faculty of Mathematics, University of Duisburg-Essen, Thea-Leymann-Straße 9, Essen 45127, Germany
}
\author{Maria Lymbery}
\address{Email: maria.lymbery@uni-due.de\\
Faculty of Mathematics, University of Duisburg-Essen, Thea-Leymann-Straße 9, Essen 45127, Germany
}
%\date{}

\keywords{Biot's model of consolidation, four-field formulation, $H(\rm div \, \rm div) \cap H(\rm div)$-conforming discretization, conservative mixed-mixed finite element method, stability analysis, a priori error estimates}

\maketitle

\begin{abstract}
We consider a four-field formulation of Biot's quasi-static model of consolidation with the (effective) elastic stress, the solid displacement,
the fluid flux and the fluid pressure as unknown physical quantities of interest. The weak form of this system composed of
the momentum and mass balance equations complemented by the stress-strain relation and Darcy's law yields a two-fold perturbed saddle-point problem. 
Its well-posedness is proven for a natural choice of inifinite-dimensional Hilbert spaces, where the displacement-flux pair is sought in a novel Hilbert space.
Next, based on recent works on $H(\rm div \, \rm div)$-conforming finite elements, we propose a family of
%$H({\rm div} \, {\rm div}) \cap H({\rm div})$-conforming
mixed-mixed finite element methods that preserve the angular momentum as well as the fluid mass balance pointwise, i.e., in a strong sense.
We prove the well-posedness of the arising discrete problem and establish optimal a priori error estimates for the related family of fully conservative mixed-mixed finite element methods.

\end{abstract}

\section{Introduction}\label{sec:introduction} 

The basic theory of poroelasticity describes the interaction between fluid flow and solid deformation within a linear porous
medium and was introduced by M.~A. Biot first for quasi-static phenomena~\cite{Biot1941general} in 1941 and later extended
to dynamics~\cite{Biot1956waves1,Biot1956waves2,Biot1962acoust,Biot1962Mech},  the latter works taking into account
wave propagation, energy dissipation, and other relaxation effects.
  
Traditionally, Biot's theory has been used in geomechanics, e.g., for monitoring and modeling land subsidence due to hydrocarbon
recovery or in reservoir engineering, see, e.g.,~\cite{Barenblatt1960basic,Zenisek1984finite}.
The important observation that biological tissues can be interpreted as porous, permeable and deformable media infiltrated
by fluids, such as blood and interstitial fluid, cf.~\cite{Bothl2022iterative,Pitre2017},
ultimately led to the successful  employment of the classical Biot's models and their generalizations in the studies of the biomechanical
behavior of human organs like the brain, 
\cite{Guo_etal2018subject-specific,
%Vardakis2019fluid,
KrLeLyOsSc2022}, 
the liver,~\cite{Aichele2021fluids}, the kidney,~\cite{Ong2009Deformation}, or the lung,~\cite{Dai2014comparison}.
      
The well-posedness of the mathematical models proposed by Biot and their stable discretization have been addressed
%in~\cite{Carlson1973Lin,Dafermos1968exist,Showalter2000Diff,Zienkiewicz1982Basic,Zienkiewicz1984dynamic,Zenisek1984existence,Mielke2013Homo}, 
%see also~\cite{Cheng2016Poro,Seifert2022Evol} for a more comprehensive overview.
in~\cite{Carlson1973Lin,Showalter2000Diff,Zienkiewicz1982Basic,Zenisek1984existence}, for a comprehensive overview
see also~\cite{Cheng2016Poro}.

Especially in applications targeting the modelling of biological tissue one faces physical parameter regimes in which
numerical approximation schemes potentially suffer from volumetric locking and/or spurious, nonphysical pressure oscillations. While the former has the same cause as in linear elasticity and can be cured by the use of (Stokes) stable discretizations,
%see e.g.
cf.~\cite{Babuska1992locking}, the latter may be understood as a failure of compatibility between the finite
element (FE) spaces and is related to violating the principle of mass conservation~\cite{Haga2012causes}.
Usually, both of these conditions can be observed when the displacement of the porous skeleton is driven
to a divergence-free state, the permeability of the porous solid is low and the so-called “constrained specific
storage constant” is close to zero~\cite{Phillips2009overcoming}, properties typically encountered in soft
tissue~\cite{Vardakis2016investigating,lee2018mixed}. Exactly divergence-free hybrid discontinuous Galerkin
(DG) methods are known for their favourable properties in computing approximate solutions of unsteady
incompressible flow problems, see~\cite{Lehrenfeld2016high}.

In~\cite{HongKraus2018parameter,KanschatRiviere2018finite}, a family of strongly mass-conserving
discretizations of Biot's model has been suggested, exploiting $H({\rm div})$-conforming DG approximations
of the displacement field, resulting in uniformly well-posed discrete models and parameter-robust near-best
approximations in parameter-dependent norms. 

Conservative discretizations in general aim at avoiding a violation of physical laws or principles as the
conservation of mass and the conservation of angular momentum in case of the quasi-static Biot model.
Symmetric stress approximations are desirable with regard to preserving the angular momentum. A mixed
formulation of both the mechanics and flow sub problems provides a suitable starting point to construct 
strongly conservative mixed-mixed finite element methods for the four-field model with the stress, the
displacement, the fluid flux, and the fluid pressure as physical quantities of interest. Such a model and
its fully conservative discretization are subject of the present work.
    
Existing works that investigate similar approaches to the numerical solution of the quasi-static Biot model
include the four-field formulation proposed in~\cite{Yi2014convergence}, which combines a mixed formulation
of the mechanics subproblem based on the Hellinger-Reissner principle with a standard mixed formulation of
the flow subproblem. The a priori error estimates in~\cite{Yi2014convergence} have been proved for a mixed
finite element method employing the Arnold-Winther and the Raviart-Thomas spaces for the discretization of
these two coupled subproblems, respectively.  
This model has further been investigated in~\cite{Lee2016robust} presenting error estimates, robust with respect
to the Lam\'{e} parameters and the limiting case of vanishing constrained storage coefficient, in $L^\infty$ norm
in time and $L^2$ norm in space.
With the skew-symmetric part of the gradient of the displacement as an additional unknown, which acts as
Lagrange multiplier to enforce the symmetry of the stress tensor, the method studied herein also works with
weakly symmetric stress approximations, thus offering certain advantages regarding computational cost and
ease of implementation, not least with regard to hybridization, see,
e.g.,~\cite{ArnoldBrezzi1985mixed,CockburnGopalakrishnan2004characterization}.
Other three-, four -, and five-field models additionally involve a total pressure. Their discretization, a priori error
analysis, and robust preconditioning techniques have been addressed, e.g.,
in~\cite{oyarzua2016locking,Lee2017parameter,
%Kumar2020conservative,
Lee2023analysis}.

Finite element spaces featuring symmetric stress approximations have been explored in the context of discretizing the equations of linear elasticity~\cite{HuZhang2015} and also the biharmonic equation~\cite{HuMaZhang2021}. 
The construction in~\cite{HuMaZhang2021} for two and three dimensions has been generalized to finite elements for $H(\rm div \, \rm div)$- and $H(\rm div)$-conforming symmetric tensors on simplexes in arbitrary dimension in~\cite{ChenHuang2022} using characterizations of degrees of freedom (DOF) in the dual spaces of trace and bubble spaces and certain subspaces
of the latter. 

As the $H(\rm div \, \rm div)$-conformity of the symmetric tensor-valued polynomial stress approximation $\sig_h$ is achieved by enforcing the normal continuity of $\div \sig_h$, this inheritance makes the basis functions easy to compute and imposes extra regularity. As shown in~\cite{ChenHuang2022}, a simple modification of the DOF can be used in order to reduce smoothness and construct symmetric tensor approximations that are only $H(\rm div \, \rm div)$-conforming but no longer $H({\rm div} \, {\rm div}) \cap H({\rm div})$-conforming. Moreover, redistributing the DOF to edges and faces has been demonstrated to be a proper means to accomplish a hybridizable mixed method with super-convergence for the biharmonic equation in~\cite{ChenHuang2025}.

\subsection*{Main contributions}
The main contributions of the present work are as follows: First, we introduce a new mixed-mixed formulation
based on a novel (nonstandard) Hilbert space for the quasi-static Biot problem, or, more specifically, the static
continuous in space problem resulting from it by implicit time stepping, for instance, using the backward Euler
scheme. { The proposed variational formulation relies directly on the fundamental physics principles,
namely, the momentum balance, the mass balance, Darcy's law and the compliance formulation of the generalised
Hooke's law for linear elasticity. We neither introduce any new variables like the total stress or a total pressure, nor 
do we rewrite any equations because we aim at computing the original physical quantities of interest directly.}
The resulting physics-driven variational problem is proven to be well-posed by using the novel Hilbert space. 

Next, we exploit the discrete spaces introduced in~\cite{ChenHuang2022}, here in order to define a family of conforming strongly conservative mixed-mixed finite element methods for the Biot problem in the considered four-field formulation. The arising finite-dimensional variational problem is proven to be well-posed and optimal error estimates for the approximations
of all four fields are derived. The estimates are optimal in $H(\rm div)$ norm for the stress approximation and both in energy norm related to the novel Hilbert space and in $L^2$ norm for the approximation of the displacement-flux pair, as well as in $L^2$ norm for the pressure approximation.
%Under sufficient smoothness of the pressure field, 
We further show that a cost-efficient post-processing procedure can be applied to lift the pressure approximation into a space of piecewise polynomials
of one degree higher increasing the pressure-approximation order by one.

The paper is organized as follows. Section~\ref{sec:formulation} discusses the strong formulation of the problem under investigation. Section~\ref{sec:weak_form} presents a novel variational four-field formulation, starting with a discussion
of the underlying function spaces in Section~\ref{subsec:function_spaces}, exposing the continuous variational problem in Section~\ref{subsec:variational_formulation}, and showing its well-posedness in Section~\ref{subsec:well_posedness}. Section~\ref{sec:fem} establishes theoretically a family of strongly conservative conforming finite element methods for Biot's model of consolidation by defining the underlying discrete spaces in Section~\ref{subsec:discret_spaces}, proposing the corresponding mixed-mixed method in Section~\ref{subsec:mixed_mixed_fem},
{
proving discrete well-posedness in Section~\ref{subsec:fem_stability_estimate}, and providing optimal a priori error estimates in Section~\ref{subsec:fem_error_estimate} as well as a post-processing procedure to increase the order of the pressure
approximation in~Section~\ref{subsec:fem_post_processing}.
%Finally, Section~\ref{sec:num_res} presents numerical results obtained with the proposed family
%of strongly conservative mixed-mixed methods that demonstrate the ease of implementation and
%confirm the theoretical error estimates.
}

\section{Problem formulation}\label{sec:formulation} 

In this work we consider a four-field formulation of Biot's quasi-static model of consolidation in a bounded Lipschitz domain
$\Omega$ in $\mathbb{R}^{d}, d=1,2,3$ modelling the flow of a viscous incompressible fluid through an elastic porous matrix 
that is deformed by the fluid pressure as well as surface and/or body forces acting on the solid. The four quantities of interest are collected in a solution vector $\bx := (\sig,(\bu,\bw),p)^T$ where $\sig:=\sig(\bu)$ denotes the effective (elastic) stress, $\bu$ the displacement field, $\bw$ the Darcy velocity of the fluid (flux), and $p$ the fluid pressure. Under the assumption of linear elasticity, the stress is proportional to the strain, expressed by Hooke's law
\begin{equation}\label{eq:Hook}
\sig = C \eps(\bu) = 2 \mu \eps(\bu) + \lambda \divv \bu I ,
\end{equation}
where $I$ denotes the identity tensor and $\eps(\bu) = \frac{1}{2}(\nabla \bu + (\nabla \bu)^T)$ the symmetric part of the gradient of
a weakly differentiable vector field $\bu$. Then, with the fourth-order compliance tensor $A$ defined by
$$
A \sig = \frac{1}{2} \mu \left(
\sig - \frac{\lambda}{d \lambda+2 \mu}
\text{tr}(\sig) I  \right)
$$
for given Lam\'{e} parameters $\lambda$ and $\mu$, we have the relation
\begin{equation}\label{eq:Hook_2}
A \sig = \eps(\bu).
\end{equation}
The mathematical model we study is described by the following system of partial differential equations
\begin{subequations}\label{eq:Biot_strong}
\begin{align}
A \sig - \eps(\bu) &= {\bm 0} \quad \text{in}~~ \Omega\times (0,T),
\label{Biot1_strong}\\
-\div \sig  +  \nabla p 
&= \bff \quad \text{in}~~ \Omega\times (0,T),
\label{Biot2_strong}\\
R_p^{-1}  \bw + \nabla p&= \boldsymbol 0 \quad \text{in}~~ \Omega\times (0,T), \label{Biot3_strong}\\
\partial_t \left( \div \bu + S p\right) + \div \bw  &=g \quad \text{in}~~ \Omega\times (0,T).
\label{Biot4_strong}
\end{align}
\end{subequations}
%
%The initial and boundary conditions to the sought solution $\bx$ of the system~\eqref{eq:Biot_strong} are as follows.
We assume that at the initial time moment $t=0$ the fluid pressure $p$ and the displacement field $\bu$ satisfy the initial conditions
\begin{subequations}\label{eq:Biot_IC}
\begin{eqnarray}
\label{eq:Biot_IC_p}
p(x,0) &=& p_{0}(x)  \qquad \mbox{for } x \in \Omega,\\
\label{eq:Biot_IC_u}
\bu(x,0) &=& {\bu}_0(x) \qquad \mbox{for }  x \in \Omega .
\end{eqnarray}
\end{subequations}
Moreover, the following boundary conditions 
\begin{subequations}\label{eq:Biot_BC}
\begin{eqnarray}
\label{eq:Biot_BC_pD}
p(x,t) &=& p_{D}(x,t)  \quad \mbox{for } x \in \Gamma_{p,D}, \quad t > 0,\\
\label{eq:Biot_BC_pN}
R_p \displaystyle \frac{\partial p(x,t)}{\partial \bn} &=& p_{N}(x,t)  \quad \mbox{for }  
x \in \Gamma_{p,N}, \quad t > 0,\\ 
\label{eq:Biot_BC_uD}
\bu(x,t) &=& {\bu}_D(x,t)  \quad \mbox{for }  x \in \Gamma_{\bu,D}, \,\quad t > 0, \\ 
\label{eq:Biot_BC_uN}
(\sig(x,t)-\alpha p  \boldsymbol I) \, {\bn} (x) &=
& {\bu}_N(x,t) \quad \mbox{for }  x \in \Gamma_{\bu,N}, \,\quad t > 0, ~~~~~~~~~~~~~~~~~~~~
\end{eqnarray}
\end{subequations}
are imposed on $p$ and $\bu$ where
$\Gamma_{p,D} \cap \Gamma_{p,N} = \emptyset$,
$\overline{\Gamma}_{p,D}\cup \overline{\Gamma}_{p,N}=\Gamma=\partial{\Omega}$
and
$\Gamma_{\bu,D} \cap \Gamma_{\bu,N} = \emptyset$,
$\overline{\Gamma}_{\bu,D} \cup \overline{\Gamma}_{\bu,N}=\Gamma$. 

After time discretization of~\eqref{eq:Biot_strong} by an implicit (or semi-implicit) time integration method
and a variable substitution, similar to what has been shown in~\cite{HongKraus2018}, the static problems to be solved in each time step, written as operator equations,
are of the form
\begin{equation}\label{eq:operator_form}
\bm{A} \bm x = \bm{F},
\end{equation}
for a solution vector $\tilde \bx := (\sig,(\tilde \bu,\tilde \bw),\tilde p)^T$ and right had side
$\bm{F}=(\bm 0,(- \tilde \bff, \bm 0), \tilde{g})^T$, where the operator matrix $\bm{A}$ is defined by
\begin{equation}\label{eq:operator_A}
\bm{A} := \left[\begin{array}{cccc}
A & -\eps & 0 & 0  \\
\div & 0 & 0 & - \nabla  \\
0 & 0 & -R_p^{-1} & - \nabla   \\
0 & \div & \div & S I
\end{array}\right] ,
\end{equation}
and $\tilde{g}$ depends on the specific time integrator; The symbols with tilde indicate that the quantities
they denote are related to the corresponding (original) quantities by a proper scaling with problem
parameters and/or time step size.\footnote{Note that for ease of notation, we will skip the tilde symbol again 
in what follows, being aware of the fact that the recovery of the original solution vector requires the appropriate
rescaling.}  

{
Note that problem~\eqref{eq:operator_form}--\eqref{eq:operator_A}, apart from rescaling, which is a matter of convenience, directly reflects the physics laws of Biot's model.}

\section{Weak form of the problem}\label{sec:weak_form} 

In this section, we present the weak formulation of the (static) four-field model~\eqref{eq:operator_form}--\eqref{eq:operator_A}
that arises in each step of an implicit (or semi-implicit) time-stepping method.

\subsection{Function spaces}\label{subsec:function_spaces} 

To begin with, we introduce some notation thereby also recalling certain standard Hilbert spaces.

Let $\mathbb{V}$ denote the set of all real $d$ vectors, $\mathbb{M}$ the set of all real $d{\times}d$ matrices, and $\mathbb{S}$ its subset of
symmetric $d{\times}d$ matrices. 
Further, with $L^2(\Omega;\mathbb{V})$ and $L^2(\Omega;\mathbb{S})$ we denote the Hilbert spaces of all real vector-valued and symmetric tensor-valued functions whose entries are in $L^2(\Omega)$, i.e., square Lebesgue-integrable scalar functions, respectively.

Next, we define the Hilbert space
$$
\Sigma := H(\div,\Omega;\mathbb{S}) :=\{ \sig  \in L^2(\Omega;\mathbb{S}): \div \sig \in  L^2(\Omega;\mathbb{V}) \},
$$
its subspace
$$
\Sigma^0 :=\{ \sig \in \Sigma: (\sig -\alpha p  \boldsymbol I) \, \bn = \bm 0 \text{ on }\Gamma_{\bu,N}\},
$$
and the set
$$
\Sigma^D :=\{\sig \in \Sigma: (\sig -\alpha p  \boldsymbol I) \, \bn = \bu_{N}(x,t) \text{ on }\Gamma_{\bu,N}\},
$$
equipped with the norm $\Vert \cdot \Vert_\Sigma$ defined by
$$
\Vert \sig \Vert_\Sigma^2 := \Vert \sig \Vert^2 + \Vert \div \sig \Vert^2
$$
where $\Vert \cdot \Vert$ denotes the $L^2$ norm for functions in $L^2(\Omega;\mathbb{S})$ or $L^2(\Omega;\mathbb{V})$ (or $L^2(\Omega)$).

For a pair of vector fields $\bar{\bv} := (\bu,\bw)^T$ we introduce the novel Hilbert space
$$
\bar{V} := \{ \bar{\bv} = (\bu,\bw)^T:  \bu \in L^2(\Omega;\mathbb{V}), \bw \in L^2(\Omega;\mathbb{V}), \mbox{ and } \div (\bu + \bw) \in  L^2(\Omega) \},
$$
as well as its subspace
$$
\bar{V}^0 := \{ \bar{\bv} = (\bu,\bw)^T  \in \bar{V}: \bw \cdot \bn = 0 \text{ on }\Gamma_{p,N} \}
$$
both equipped with the norm $\Vert \cdot \Vert_{\bar{V}}$ defined by
$$
\Vert \bar{\bv} \Vert_{\bar{V}}^2 := \Vert (\bu,\bw)^T  \Vert_{\bar{V}}^2 
:= \Vert \bu \Vert^2 + \Vert \bw \Vert^2 + \Vert \div (\bu + \bw)  \Vert^2.
$$

Moreover, we will make use of the Hilbert space
$$
H(\div \div,\Omega;\mathbb{S}) := \{ \sig  \in L^2(\Omega;\mathbb{S}): \div \div \sig \in  L^2(\Omega) \}
$$
equipped with the norm $\Vert \cdot \Vert_{H(\div \div,\Omega;\mathbb{S}) }$ defined by
$$
\Vert \sig \Vert_{H(\div \div,\Omega;\mathbb{S})}^2 := \Vert \sig \Vert^2 + \Vert \div \div \sig \Vert^2.
$$
Later on, in Section~\ref{sec:fem}, we will also refer to the intersection $H(\div \div,\Omega;\mathbb{S}) \cap \Sigma$ of these two Hilbert spaces.

\subsection{Variational formulation}\label{subsec:variational_formulation} 

The weak form of problem~\eqref{eq:operator_form} reads. Find
$\bx := (\sig,(\bu,\bw),p)^T \in X:= \Sigma^0 \times \bar{V}^0 \times L^2(\Omega)$ which satisfies the equations
\begin{subequations}\label{eq:Biot_weak}
\begin{align}   
(A \sig, \bm \tau) + (\bu,\divv \bm \tau) = &  \langle \bu_D, 
\bm \tau \bn \rangle_{\Gamma_{\bu,D}}, \label{eq:Biot_weak1}\\
(\div \bm \sigma,\bm v) + (p, \div \bv) = & \; -(\bff,\bv),  \label{eq:Biot_weak2}\\  
- (R_p^{-1}  \bw, \bz) + (p,\div \bz) = &
\langle p_{D}, \bz \cdot \bn \rangle_{\Gamma_{p,D}}, \label{eq:Biot_weak3} \\ 
(\div \bu, q) + (\div \bw, q) + (S p,q) = & (\tilde{g},q) \label{eq:Biot_weak4} 
\end{align}
\end{subequations}
for all $\by := (\bm \tau,(\bv,\bz),q)^T \in X$.
Note that for simplicity we have assumed that $p_{N}(x,t) = 0$ and ${\bu}_N(x,t)= \bm 0$, which in
presence of inhomogeneous essential boundary conditions can be achieved by homogenization.

\begin{remark} \label{Biot_Stokes}
Note that the mixed-mixed  weak formulation of the Stokes problem can be viewed as a special case of problem \eqref{eq:Biot_weak} where the filed $\bw$
and equation \eqref{eq:Biot_weak3} are not present and $A$ equals the identity tensor in \eqref{eq:Biot_weak1}. In this case, equation \eqref{eq:Biot_weak4} reduces to $(\div \bu, q)=0$ and the Hilbert space $\bar{V}$ reduces to the $H(\div,\Omega; \mathbb{V})$ space.
\end{remark}

\subsection{Well-posedness analysis}\label{subsec:well_posedness} 

For the well-posedness analysis of the continuous problem~\eqref{eq:Biot_weak} we assume only that 
$\sig \in \Sigma^0$ or $\sig \in \Sigma$, the latter if $\Gamma_{\bu,N} = \emptyset$. 

Associated with the weak formulation~\eqref{eq:Biot_weak} we consider the bilinear form
\begin{align}\label{eq:Biot_bilinear_form}  
\mathcal A ((\sig,(\bu,\bw),p),(\bm \tau,(\bv,\bz),q))
&:=(A \sig, \bm \tau) + (\bu,\div \bm \tau) 
+ (\div \bm \sigma,\bm v) - (R_p^{-1}  \bw, \bz) \nonumber \\
& + (p,\div (\bv + \bz)) + (\div (\bu + \bw),q) + (S p,q)
\end{align}
on $X {\times} X$ and the linear form
\begin{align}\label{eq:Biot_linear_form}  
\mathcal F ((\bm \tau,(\bv,\bz),q))
&:=  \langle \bu_D, 
\bm \tau \bn \rangle_{\Gamma_{\bu,D}}
- (\bff,\bv)
+ \langle p_{D}, \bz \cdot \bn \rangle_{\Gamma_{p,D}}
+ (\tilde{g},q)
\end{align}
on $X$ with which problem~\eqref{eq:Biot_weak} equivalently can be expressed in the form:
Find $\bx := (\sig,(\bu,\bw),p)^T \in X:= \Sigma^0 \times \bar{V}^0 \times L^2(\Omega)$ which satisfies
\begin{equation}\label{eq:Biot_weak_abstract}
\mathcal A ((\sig,(\bu,\bw),p),(\bm \tau,(\bv,\bz),q))
=\mathcal F ((\bm \tau,(\bv,\bz),q)) \quad \forall \by := (\bm \tau,(\bv,\bz),q)^T \in X.
\end{equation}
Defining now the (combined) norm $\Vert \cdot \Vert_X$ by
\begin{equation}\label{eq:Xnorm}
\Vert (\bm \tau,(\bv,\bz),q)^T \Vert_X^2 := \Vert \bm \tau \Vert_\Sigma^2 + \Vert (\bv,\bz)^T \Vert_{\bar{V}}^2 + \Vert q\Vert^2 ,
\end{equation}
a simple calculation using Cauchy-Schwarz inequality shows that $\exists \bar{C} > 0$, in general depending on certain problem
parameters, such that
\begin{equation}\label{eq:boundedness}
\mathcal A ((\sig,(\bu,\bw),p),(\bm \tau,(\bv,\bz),q)) \le  C_0 \Vert (\sig,(\bu,\bw),p)^T \Vert_X \Vert (\bm \tau,(\bv,\bz),q)^T \Vert_X
%\quad \forall (\sig,(\bu,\bw),p)^T, (\bm \tau,(\bv,\bz),q)^T \in X.
\end{equation} 
for all $((\sig,(\bu,\bw),p)^T, (\bm \tau,(\bv,\bz),q)^T) \in X \times X$.
Moreover, we take note of the following two stability estimates:
\begin{equation}\label{eq:inf_sup_sigma_v}
\exists \beta_1 > 0: \quad \inf_{\substack{\bv \in L^2(\Omega;\mathbb{V}) \\ \bv \neq \bm 0}} \sup_{\substack{\sig \in \Sigma \\ \sig \neq \bm 0}} 
\frac{(\div \sig,\bv)}{\Vert \sig \Vert_\Sigma \Vert \bv \Vert} \ge \beta_1 > 0 ,
\end{equation} 
\begin{equation}\label{eq:inf_sup_w_q}
\exists \beta_2 > 0: \quad \inf_{\substack{q \in L^2(\Omega) \\ q \neq 0}} \sup_{\substack{\bm w \in H(\div,\Omega) \\ \bm w \neq \bm 0}} 
\frac{(\div \bm w,q)}{\Vert \div \bm w \Vert_{H(\div,\Omega)} \Vert q \Vert} \ge \beta_2 > 0 .
\end{equation} 
With these two conditions at hand, we are able to establish the well-posedness of problem~\eqref{eq:Biot_weak_abstract}, or, equivalently problem~\eqref{eq:Biot_weak}. 
\begin{theorem}\label{thm:well-posedness}
Problem~\eqref{eq:Biot_weak_abstract} with $\mathcal A (\cdot,\cdot)$ defined by~\eqref{eq:Biot_bilinear_form}
and $\mathcal F (\cdot)$ defined by~\eqref{eq:Biot_linear_form} is well-posed.
%i.e., it has a unique solution $(\sig,(\bu,\bw),p)^T \in X$. 
{
In particular, the continuity estimate~\eqref{eq:boundedness} and the inf-sup condition 
%\begin{align}
%\mathcal A ((\sig,(\bu,\bw), p),(\tau,(\bv,\bz),q))\le C_0  \left(\Vert \sig \Vert_\Sigma + \Vert (\bu,\bw)^T \Vert_{\bar{V}} + \Vert p \Vert\right) (\Vert \btau \Vert_\Sigma + \Vert (\bv,\bz)^T \Vert_{\bar{V}} + \Vert q \Vert)
%\end{align}
%{ for all $(\sig,(\bu,\bw), p)^T, (\btau,(\bv,\bz),q)^T \in X$ and}
\begin{align}
\sup_{\btau \in \Sigma, (\bv,\bz)\in \bar{V}, q\in P}\frac{\mathcal A ((\sig,(\bu,\bw),p),(\btau,(\bv,\bz),q))}{\Vert \btau \Vert_\Sigma + \Vert (\bv,\bz)^T \Vert_{\bar{V}} + \Vert q \Vert}\ge c_0\left(\Vert \sig \Vert_\Sigma + \Vert (\bu,\bw)^T \Vert_{\bar{V}} + \Vert p \Vert\right),
\end{align}
{ for all  $(\sig,(\bu,\bw),p)^T \in X$},
hold true with constants $C_0$ and $c_0$ that do not depend on the parameter~$\lambda$. 
}
\end{theorem}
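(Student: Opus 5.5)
The plan is to prove the inf-sup condition directly, by constructing for each given $\bx=(\sig,(\bu,\bw),p)^T \in X$ a suitable test function $\by \in X$. Continuity~\eqref{eq:boundedness} follows from Cauchy--Schwarz: the coupling terms are $(p,\div(\bv+\bz))$ and $(\div(\bu+\bw),q)$, which are controlled by the $\bar V$-norm, and $\Vert A\sig\Vert \le \frac{1}{2\mu}\Vert\sig\Vert$ independently of $\lambda$. The construction of $\by$ has four pieces.

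\emph{Diagonal part.} First take $\by_1=(\sig,(-\bu,-\bw),p)$. The off-diagonal terms cancel in pairs, $(\bu,\div\sig)-(\div\sig,\bu)=0$ and $(p,\div(-\bu-\bw))+(\div(\bu+\bw),p)=0$, so
\[
\mathcal A(\bx,\by_1)=(A\sig,\sig)+(R_p^{-1}\bw,\bw)+(Sp,p).
\]
This gives control of $\Vert\bw\Vert^2$. It does not control $\Vert\sig\Vert^2$ uniformly in $\lambda$, since $A$ degenerates on the trace part; it only controls the deviatoric part $\Vert\sig^D\Vert^2$.

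\emph{Divergence terms.} Next add $\by_2=(0,(\div\sig,0),\div(\bu+\bw))$, which is admissible because $\div\sig\in L^2$ and the flux component vanishes. It yields $\Vert\div\sig\Vert^2+\Vert\div(\bu+\bw)\Vert^2$. The cross terms are $(p,\div\div\sig)$, which does not appear since $\bv$ enters only through $\div\bv$, together with $(S\,p,\div(\bu+\bw))$. Here care is needed: $\div(\div\sig)$ is not defined in $L^2$. To avoid this I will instead use $\bv=\div\sig$ only through $(\div\sig,\bv)$ and compensate the term $(p,\div\bv)$ by simultaneously choosing $\bz=-\bv$, so that $\div(\bv+\bz)=0$. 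That is, $\by_2=(0,(\div\sig,-\div\sig),\div(\bu+\bw))$, where the term $(R_p^{-1}\bw,\div\sig)$ is absorbed by Young's inequality against the $\bw$ control from $\by_1$. The normal trace condition on $\bz$ requires $\div\sig\cdot\bn=0$ on $\Gamma_{p,N}$, which need not hold. If it fails, I would instead take $\bz$ to be the $H(\div)$ lifting from~\eqref{eq:inf_sup_w_q} of $\div(\div\sig)$ in a weak sense; this is the delicate step.

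\emph{Displacement and pressure.} For $\bu$ use~\eqref{eq:inf_sup_sigma_v}: choose $\btau$ with $\div\btau=\bu$ and $\Vert\btau\Vert_\Sigma\le\beta_1^{-1}\Vert\bu\Vert$, giving $\Vert\bu\Vert^2-(A\sig,\btau)$, where the last term is absorbed. For $p$ use~\eqref{eq:inf_sup_w_q}: choose $\bz$ with $\div\bz=p$ and $\bz\cdot\bn=0$ on $\Gamma_{p,N}$, giving $\Vert p\Vert^2+(R_p^{-1}\bw,\bz)$, again absorbed.

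\emph{$\lambda$-robust trace control.} Finally, recover the trace of $\sig$ via the standard estimate $\Vert\sig\Vert\lesssim\Vert\sig^D\Vert+\Vert\div\sig\Vert$, valid for $\sig\in\Sigma^0$ when $\Gamma_{\bu,N}\neq\emptyset$, or modulo a constant multiple of $I$ otherwise. This handles $\lambda$-robustness.

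A weighted sum $\by=\by_1+\delta_1\by_2+\delta_2\by_3+\delta_3\by_4$ with small $\delta_i$ then bounds $\mathcal A(\bx,\by)\gtrsim\Vert\bx\Vert_X^2$, while $\Vert\by\Vert_X\lesssim\Vert\bx\Vert_X$. The inf-sup condition follows, and with it well-posedness via Babu\v{s}ka's theorem, since $\mathcal A$ is symmetric.

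The main obstacle is securing the full $\bar V$-norm, specifically $\Vert\bu\Vert$ and $\Vert\div(\bu+\bw)\Vert$, without generating terms involving $\div\div\sig$ or $\div\bu$ alone, together with the trace-free argument in the pure-traction case.
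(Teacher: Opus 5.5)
Your test function is, up to the symmetry of $\mathcal A$ and an overall rescaling (your small weights $\delta_i$ play the role of $1/\delta_1$), exactly the paper's test element in~\eqref{eq:def_x_tilde}. It consists of the diagonal part $(\sig,(-\bu,-\bw),p)$, a $\Sigma$-lifting of the displacement, and the pair $\pm\div\sig$ in the two vector slots, so that $\div\div\sig$ never appears. It also contains an $H(\div)$-lifting of the pressure and $\div(\bu+\bw)$ in the pressure slot.

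The substantive difference is your last step. The paper recovers $\Vert\btau\Vert^2$ only from $\delta_1(A\btau,\btau)$, whose $L^2$-coercivity constant is of order $(d\lambda+2\mu)^{-1}$. As written, its argument therefore gives only a $\lambda$-dependent $c_2$. You combine three ingredients: $(A\sig,\sig)\ge(2\mu)^{-1}\Vert\sig^D\Vert^2$; the $\lambda$-uniform bound $(A\btau,\btau)\le(2\mu)^{-1}\Vert\btau\Vert^2$ when absorbing $(A\sig,\btau)$; and $\Vert\sig\Vert\lesssim\Vert\sig^D\Vert+\Vert\div\sig\Vert$. That combination is exactly what the asserted $\lambda$-independence needs, and the paper's proof does not supply it.

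Your first flagged obstacle, the boundary condition on $\bz=-\div\sig$, has a fallback that would not work: lifting $\div\div\sig$ into $H(\div)$ is impossible in general, since $\div\div\sig$ is only in $H^{-1}(\Omega)$. The fallback is also unnecessary. In $\bar V$ only the sum $\bu+\bw$ has a normal trace, so the boundary constraint in $\bar V^0$ can only be meaningful for the sum. For instance, read $\bar V^0$ as the $\bar V$-closure of smooth pairs with $\bw\cdot\bn=0$ on $\Gamma_{p,N}$. Then $(\bv,-\bv)\in\bar V^0$ for every $\bv\in L^2(\Omega;\mathbb{V})$: approximating $\bv$ by compactly supported smooth $\bv_n$ gives $\bar V$-distance $\sqrt{2}\,\Vert\bv-\bv_n\Vert$. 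The paper makes the same choice (the $-\div\btau$ in the flux component of~\eqref{eq:def_x_tilde}) without comment.

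Your second obstacle, $\Gamma_{\bu,N}=\emptyset$, is the pure-displacement case, not pure traction, and no argument can close it. Take $\bx=(I,(\bm 0,\bm 0),0)\in X$. Every term of $\mathcal A(\bx,\by)$ vanishes except $(AI,\btau)=(d\lambda+2\mu)^{-1}(I,\btau)$, with the standard prefactor $1/(2\mu)$ in $A$. Hence the inf-sup constant is at most $(d\lambda+2\mu)^{-1}$. The $\lambda$-independent bound therefore holds only when $\Gamma_{\bu,N}$ has positive measure and $\Sigma^0$ enforces $\sig\bn=\bm 0$ there, or on the subspace $\int_\Omega\mathrm{tr}\,\sig=0$. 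In the pure-displacement case your argument, like the paper's, gives well-posedness only with a $\lambda$-dependent constant. This is a defect of the statement, not a gap in your argument.
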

\begin{proof}
In view of Babu\v{s}ka's theorem~\cite{Babuska1971error} { it suffices to verify the following three conditions. Firstly, the linear form~\eqref{eq:Biot_linear_form} in problem~\eqref{eq:Biot_weak_abstract} is bounded in $X$-norm, which is obvious from the definitions. Secondly,  the bilinear form~\eqref{eq:Biot_bilinear_form} is bounded in $X$-norm, i.e., the continuity condition~\eqref{eq:boundedness} holds, and, thirdly, it is also inf-sup-stable, i.e.,  $\mathcal A (\cdot,\cdot)$ satisfies the condition}
\begin{equation}\label{eq:inf_sup_A}
\exists c_0 > 0: \inf_{\bx \in X} \sup_{\by \in X} \frac{\mathcal{A}(\bx^T,\by^T)}{\Vert \bx \Vert_{X} \Vert \by \Vert_{X}} \ge c_0 > 0
\end{equation}
where $\bx = (\sig,(\bu,\bw),p)^T, \by = (\bm \tau,(\bv,\bz),q)^T$, and $X= \Sigma^0 \times \bar{V}^0 \times L^2(\Omega)$.
We restrict ourselves here to show~\eqref{eq:inf_sup_A} because the verification of the other assumptions of the theorem is straightforward.

Due to the symmetry of $\mathcal A (\cdot,\cdot)$ we have
$$
\inf_{\bx \in X} \sup_{\by \in X} \frac{\mathcal{A}(\bx^T,\by^T)}{\Vert \bx \Vert_{X} \Vert \by \Vert_{X}}
= \inf_{\by \in X} \sup_{\bx \in X} \frac{\mathcal{A}(\bx^T,\by^T)}{\Vert \bx \Vert_{X} \Vert \by \Vert_{X}} .
$$
{
If we are able to show that for an arbitrary element $\by = (\bm \tau,(\bv,\bz),q)^T \in X$ we can find
an element $\tbx_0 = (\tsig,(\tbu,\tbw), \tp)^T \in X$ such that
\begin{equation}\label{eq:boundedness_estimate}
\Vert \tbx_0  \Vert_X^2 =
\Vert \tsig \Vert_\Sigma^2 + \Vert (\tbu,\tbw)^T \Vert_{\bar{V}}^2 + \Vert \tp \Vert^2 
\le c_1 (\Vert \bm \tau \Vert_\Sigma^2 + \Vert (\bv,\bz)^T \Vert_{\bar{V}}^2 + \Vert q \Vert^2)
= c_1 \Vert \by  \Vert_X^2
\end{equation}
and
\begin{equation}\label{eq:coercivity_estimate}
\mathcal A (\tbx_0^T,\by^T) 
=
\mathcal A ((\tsig,(\tbu,\tbw),\tp),(\bm \tau,(\bv,\bz),q)) 
\ge c_2 (\Vert \bm \tau \Vert_\Sigma^2 + \Vert (\bv,\bz)^T \Vert_{\bar{V}}^2 + \Vert q \Vert^2) 
= c_2 \Vert \by  \Vert_X^2
\end{equation}
hold true for positive constants $c_{1}$ and $c_{2}$, the inf-sup condition~\eqref{eq:inf_sup_A} follows
from~\eqref{eq:boundedness_estimate} and~\eqref{eq:coercivity_estimate} since
\begin{align}\label{eq:continuous_inf_sup}
\sup_{\bx \in X} \frac{\mathcal{A}(\bx^T,\by^T)}{\Vert \bx \Vert_{X}}
& \ge  \frac{\mathcal{A}(\tbx_0^T,\by^T)}{\Vert \tbx_0 \Vert_{X}}
\ge \frac{c_2 \Vert \by \Vert_{X}^2}{\Vert \tbx_0 \Vert_{X}}
\ge \frac{c_2}{\sqrt{c_1}} \Vert \by \Vert_{X} , \mbox{ for all } \by \in X .
%= (\tau_h,(\bv_h,\bz_h),q_h)^T \in X_h
\end{align}
}

So we consider an arbitrary but fixed element $\by = (\bm \tau,(\bv,\bz),q)^T \in X$.
Then, in view of the inf-sup condition~\eqref{eq:inf_sup_sigma_v} there exists an element $\sig_0 \in \Sigma$ such that
\begin{equation}\label{eq:def_sig_0}
\div \sig_0 = \bv \mbox{ and } \Vert \sig_0 \Vert_\Sigma \le \beta_1^{-1} \Vert \bv \Vert .
\end{equation}
Moreover, due to the inf-sup condition~\eqref{eq:inf_sup_w_q} there exists an element $\bw_0 \in H(\div,\Omega)$ such that
\begin{equation}\label{eq:def_w_0}
\div \bw_0 = q \mbox{ and } \Vert \bw_0 \Vert_{H(\div,\Omega)} \le \beta_2^{-1} \Vert q \Vert .
\end{equation}
We then define $\tbx := (\tsig,(\tbu,\tbw), \tp)^T \in X$ in terms of
\begin{subequations}\label{eq:def_x_tilde}
\begin{align}   
\tsig & := \sig_0 + \delta_1 \btau , \label{eq:def_sig_tilde} \\
\tbu & := - \delta_1 \bv + \div \btau , \label{eq:def_u_tilde}  \\
\tbw & := \bw_0 - \div \btau - \delta_1 \bz , \label{eq:def_w_tilde}  \\
%%(\tbu,\tbw)
\tp & : = \delta_1 q + \div ( \bv + \bz ) \label{eq:def_p_tilde}
\end{align}
\end{subequations}
for a positive constant $\delta_1 < \infty$, which we will determine later,
and find that
\begin{align*} 
\Vert \tsig \Vert_\Sigma^2 + \Vert (\tbu,\tbw)^T \Vert_{\bar{V}}^2 + \Vert \tp \Vert^2 
& \le \Vert \tsig \Vert^2 + \Vert \div \tsig \Vert^2 + \Vert \tbu \Vert^2 + \Vert \tbw \Vert^2 + \Vert \div (\tbu + \tbw) \Vert^2 + \Vert \tp \Vert^2 \\
& \le \tilde{c} \max (1,\delta_1^2) ( \Vert \sig_0 \Vert^2 +  \Vert \btau \Vert^2  + \Vert \div \sig_0 \Vert^2  + \Vert \div \btau \Vert^2  +  \Vert \bv \Vert^2  \\
& \ \ \ + \Vert \bw_0 \Vert^2 + \Vert \div \bw_0 \Vert^2 + \Vert \bz \Vert^2 + \Vert q \Vert^2 + \Vert \div ( \bv + \bz ) \Vert^2 ) \\
& \le c_1 (\Vert \bm \tau \Vert_\Sigma^2 + \Vert (\bv,\bz)^T \Vert_{\bar{V}}^2 + \Vert q \Vert^2)
\end{align*}
where $c_1$ depends only on $\delta_1$.

On the other hand, for the above choice~\eqref{eq:def_x_tilde} of $\tbx = (\tsig,(\tbu,\tbw), \tp)^T$ we have
\begin{align*}
\mathcal A ((\tsig,(\tbu,\tbw),\tp),(\btau,(\bv,\bz),q)) 
& = (A (\sig_0 + \delta_1 \btau) , \btau )  + (- \delta_1 \bv + \div \btau , \div \btau ) + ( \div (\sig_0 + \delta_1 \btau) , \bv ) \\
& - (R_p^{-1} (\bw_0 - \div \btau - \delta_1 \bz) , \bz)  + (\delta_1 q + \div ( \bv + \bz ) , \div ( \bv + \bz ) ) \\
& + ( \div (- \delta_1 \bv + \bw_0 - \delta_1 \bz) , q ) + (S (\delta_1 q + \div ( \bv + \bz ) ) , q) \\
& = (A \sig_0, \btau ) +  \delta_1 (A \btau, \btau ) +  (\div \btau , \div \btau ) + (\bv, \bv) \\
& - (R_p^{-1} \bw_0, \bz) + (R_p^{-1} \div \btau , \bz)  +  \delta_1 ( R_p^{-1} \bz , \bz ) \\
& + (\div ( \bv + \bz ) , \div ( \bv + \bz ) )  + (q,q) + \delta_1 (S q, q)  + (S \div ( \bv + \bz ), q) ,
\end{align*}
which, using Young's inequality, can further be estimated from below by
\begin{align*}
\mathcal A ((\tsig,(\tbu,\tbw),\tp),(\btau,(\bv,\bz),q)) 
& \ge  \delta_1 (A \btau, \btau ) - \frac{\epsilon_1}{2}  (A \sig_0, \sig_0) - \frac{1}{2 \epsilon_1} (A \btau, \btau ) \\
& + (\bv,\bv) + \delta_1 ( R_p^{-1} \bz , \bz ) +  (\div \btau , \div \btau ) \\
& -  \frac{\epsilon_2}{2}  (R_p^{-1}  \bw_0, \bw_0) - \frac{1}{2 \epsilon_2} (R_p^{-1}  \bz, \bz) \\
& - \frac{\epsilon_3}{2}  (R_p^{-1} \div \btau, \div \btau) - \frac{1}{2 \epsilon_3} (R_p^{-1}  \bz, \bz) \\
& + (q,q) + \delta_1 ( S q, q) +  (\div ( \bv + \bz ) , \div ( \bv + \bz ) ) \\
& - \frac{\epsilon_4}{2}  (S \div ( \bv + \bz ), \div ( \bv + \bz )) - \frac{1}{2 \epsilon_3} (S q, q) \\
& \ge c_2 (\Vert \bm \tau \Vert_\Sigma^2 + \Vert (\bv,\bz)^T \Vert_{\bar{V}}^2 + \Vert q \Vert^2)
\end{align*}
for some positive constant $c_2$, provided the constants $\epsilon_i$, $i \in \{1,2,3,4 \}$ and $\delta_1$ are chosen properly,
($\epsilon_i$ small enough, depending on the model parameters and $\delta_1$ large enough, depending on the $\epsilon_i$).
Here we have also used the fact that $ \Vert \sig_0 \Vert_\Sigma \le \beta_1^{-1} \Vert \bv \Vert$ and
$ \Vert \bw_0 \Vert_{H(\div,\Omega)} \le \beta_2^{-1} \Vert q \Vert$.
This completes the proof.
\end{proof}

\section{Family of strongly conservative finite element discretizations}\label{sec:fem} 
Following the works of J.~Hu, R.~ Ma, and M.~Zhang~\cite{HuMaZhang2021}, and L.~Chen and X.~Huang~\cite{ChenHuang2022}, the goal of this section is to describe a family of strongly
conservative finite element methods for the discretization of problem~\eqref{eq:Biot_weak_abstract}.
For this purpose we use the $H(\div \div) \cap H(\div)$-conforming element originally proposed in ~\cite{HuMaZhang2021}
and ~\cite{ChenHuang2022} here as a component of a pointwise fully, i.e., angular momentum- and mass-conserving,
mixed-mixed method for the weak four-field formulation of the the quasi-static Biot problem exposed in Section~\ref{sec:weak_form}.

\subsection{Discrete spaces}\label{subsec:discret_spaces} 
$H(\div\div)$-conforming finite elements for symmetric tensors are presented in~\cite{chen2020finite, HuMaZhang2021, chen2022finite} in two and three space dimensions, $H(\div \div)$-conforming finite elements for symmetric tensors in arbitrary dimension have recently been proposed in~\cite{ChenHuang2022}. 
Moreover, in their subsequent work~\cite{chen2025new}, the authors of  ~\cite{ChenHuang2022} removed the need for super-smoothness
by redistributing the degrees of freedom of these elements to edges and faces. An a posteriori error analysis for the
symmetric $H(\div\div)$ mixed finite element method for the Kirchhoff-Love plate bending problem is conducted in \cite{hu2025optimality}. 

With the aim of keeping this presentation as self-contained as possible, we recall in detail the following degrees of freedom (DOF) as an example for the $H(\div\div)$-conforming  
$\P_k(K ; \mathbb{S})$ element  with $k\geq \max\{ d,3\}$ from~\cite{ChenHuang2022}:
\begin{align}
\boldsymbol{\tau}(\delta) & \quad \forall \delta \in \mathcal{V}(K), \label{DOF0}\\
\left(\boldsymbol{n}_i^{\top} \boldsymbol{\tau} \boldsymbol{n}_j, q\right)_f &\quad \forall q \in \P_{k+s-d-1}(f),
\quad f \in \mathcal{F}^s(K), \quad s=1, \ldots, d-1, \quad i, j \in {1, \ldots, s}, \label{DOF1}  \\
\left(\Pi_F \boldsymbol{\tau} \boldsymbol{n}, \boldsymbol{q}\right)_F &\quad \forall \boldsymbol{q} \in \mathrm{ND}_{k-2}(F), \quad F \in \mathcal{F}^1(K), \label{DOF2}  \\
\left(\boldsymbol{n}^{\top} \operatorname{div} \boldsymbol{\tau}, q\right)_F & \quad \forall q \in \P_{k-1}(F), 
\quad F \in \mathcal{F}^1(K), \label{DOF3} \\
(\div \div \boldsymbol{\tau}, q)_K & \quad \forall q \in \P_{k-2}(K) / \P_1(K), \\
(\div \boldsymbol{\tau}, \boldsymbol{q})_K & \quad \forall \boldsymbol{q} \in\left(\P_{k-3}(K ; \mathbb{V}) / \P_0(K ; \mathbb{V})\right) x, \\
(\boldsymbol{\tau}, \boldsymbol{q})_K & \quad \forall \boldsymbol{q} \in \operatorname{ker}(\cdot x) \cap \P_{k-2}(K ; \mathbb{S}),
\end{align}
where $\mathcal{V}(K)$ denotes the set of all vertices of a $d$-dimensional simplex $K$, $\mathcal{F}^s(K)$ denotes the set of subsimplexes of $K$ with co-dimension $s$ for $s=1, \ldots, d-1$,  and $\Pi_F\boldsymbol{\tau}$ denotes the projection of column vectors of $\boldsymbol{\tau}$ to the hyperplane $F$. Moreover,  $\mathrm{ND}_{k-2}(F)$ denotes the shape function space of the first kind of Ned\'el\'ec edge element on $F$, see~\cite{nedelec1980mixed,arnold2006finite}.
The sets of scalar- and vector-valued polynomials of degree $\ell$ on $K$ are denoted by $\P_{\ell}(K)$ and $\P_{\ell}(K ; \mathbb{V})$, respectively, and the set of symmetric tensor-valued polynomials of degree $\ell$ by $\P_{\ell}(K ; \mathbb{S})$. Furthermore, $\operatorname{ker}(\cdot x) \cap \P_{\ell}(K ; \mathbb{S})=x^\perp (x^\perp)^T\P_{\ell-2}(K)$ for $d=2$ with $x^\perp=(x_2, -x_1)^T$ and $\operatorname{ker}(\cdot x) \cap \P_{\ell}(K ; \mathbb{S})=x^T \times \P_{\ell-2}(K ; \mathbb{S})\times  x^T$ for $d=3$, see~\cite{chen2020finite,chen2022finiteele}.

These DOF enforce that $\operatorname{div} \boldsymbol{\tau}$ is $H(\operatorname{div})$-conforming, thus $\boldsymbol{\tau} \in H(\div \div,\Omega;\mathbb{S}) \cap H(\div,\Omega;\mathbb{S})$, and define an $H(\div \div) \cap H(\div)$-conforming finite element. The corresponding global finite element space
\begin{equation}\label{eq:conformity_Sigma_h}
\Sigma_h \subset H(\div \div, \Omega ; \mathbb{S}) \cap H(\div, \Omega ; \mathbb{S})
\end{equation}
is given by
{
\begin{equation}\label{eq:Sigma_h}
\Sigma_h :=\{\boldsymbol{\tau} \in \boldsymbol{L}^2(\Omega ; \mathbb{S}): \tau\vert_K \in \P_k(K, \mathbb{S}) \quad \forall K \in \mathcal{T}_h \mbox{ and the DOFs~\eqref{DOF0}-\eqref{DOF3}are single-valued} \} .
\end{equation}
}
Then, by construction, as shown in~\cite{ChenHuang2022}, the space $\Sigma_h$ defined in~\eqref{eq:Sigma_h} satisfies the important relation
\begin{equation}\label{eq:div_Sigma_h}
\div \Sigma_h = \text{BDM}_{k-1}
\end{equation}
where $\text{BDM}_{k-1}$ denotes the classical Brezzi-Douglas-Marini finite-element space of order $k-1$, cf.~\cite{Boffi2013mixed}.
Moreover, assuming $k \ge 3$, we will employ the finite element spaces
\begin{subequations}\label{eq:V_h_P_h}
\begin{align}
V_h & :=  \text{BDM}_{k-1} \label{eq:V_h} \\
\bar{V}_h & := \{ \bar{\bv}_h = (\bu_h,\bw_h)^T:  \bu_h , \bw_h \in V_h \}, \label{eq:barV_h} \\
P_h & := \{ q \in L^2(\Omega): q\vert_K \in \P_{k-2}(K)) \label{eq:P_h}
\end{align}
\end{subequations}
in the formulation of a fully conservative, i.e., mass- and angular momentum-conserving, discrete four-field Biot model that is stated in the following. 
\begin{remark}
Note that the space~\eqref{eq:barV_h} has built in a higher smoothness as compared to the continuous counterpart $\bar{V}$ for whose elements $(\bu,\bw)$ we only require $\div (\bu + \bw) \in  L^2(\Omega)$ and not the stronger condition that $\div \bu \in  L^2(\Omega)$ and $\div \bw \in  L^2(\Omega)$.  Functions in the space $\bar{V}_h$,
however, belong to $H(\div)$ componentwise due to~\eqref{eq:V_h} and~\eqref{eq:barV_h}.
\end{remark}

\subsection{Mixed-mixed finite element method}\label{subsec:mixed_mixed_fem}

To discretize the the four-field Biot model under investigation, we transfer the continuous variational problem~\eqref{eq:Biot_weak} to the finite element spaces
$$
\Sigma_h^0 := \{ \sig_h \in \Sigma_h: (\sig_h -\alpha p_h  \boldsymbol I) \, \bn = \bm 0 \text{ on }\Gamma_{\bu,N}\},
$$
$$
\bar{V}_h^0 := \{ \bar{\bv_h} = (\bu_h,\bw_h)^T  \in \bar{V_h}: \bw_h \cdot \bn = 0 \text{ on }\Gamma_{p,N} \},
$$
and $P_h$, which are introduced via~\eqref{eq:Sigma_h} and~\eqref{eq:V_h_P_h}.

With
\begin{equation}\label{eq:X_h}
X_h = \Sigma_h^0 \times \bar{V}_h^0 \times P_h
\end{equation}
the conforming mixed-mixed finite element method reads: Find
$\bx_h := (\sig_h,(\bu_h,\bw_h),p_h)^T \in X_h$ satisfying the equations
\begin{subequations}\label{eq:Biot_fem}
\begin{align}   
(A \sig_h, \bm \tau_h) + (\bu_h,\div \tau_h) = &  \langle \bu_D, 
\bm \tau_h \bn \rangle_{\Gamma_{\bu,D}}, \label{eq:Biot_fem1}\\
(\div \sig_h,\bv_h) + (p_h, \div \bv_h) = & \; -(\bff,\bv_h),  \label{eq:Biot_fem2}\\  
- (R_p^{-1}  \bw_h, \bz_h) + (p_h,\div \bz_h) = &
\langle p_{D}, \bz_h \cdot \bn \rangle_{\Gamma_{p,D}}, \label{eq:Biot_fem3} \\ 
(\div \bu_h, q_h) + (\div \bw_h, q_h) + (S p_h,q_h) = & (\tilde{g},q_h) \label{eq:Biot_fem4} 
\end{align}
\end{subequations}
for all $\by_h := (\bm \tau_h,(\bv_h,\bz_h),q_h)^T \in X_h$.

\subsection{Discrete well-posedness}\label{subsec:fem_stability_estimate} 

In this subsection we want to prove that the discrete problem~\eqref{eq:Biot_fem} is also well-posed.
Our prove basically repeats the steps of the proof of Theorem~\ref{thm:well-posedness} and relies on
{ two discrete inf-sup-conditions. The first one is provided in the following lemma.}
\begin{lemma}\label{lem:inf_sup}
There exists a constant $\beta_3$ which does not depend on the mesh size $h$ such that
\begin{equation}\label{eq:inf_sup_sigma_h_v_h}
\exists \beta_{3} > 0: \quad \inf_{\substack{\bv_h \in V_h \\ \bv_h \neq \bm 0}} \sup_{\substack{\sig_h \in \Sigma_h \\ \sig_h \neq \bm 0}}
\frac{(\div \sig_h,\bv_h)}{\Vert \sig_h \Vert_\Sigma \Vert \bv_h \Vert} \ge \beta_{3} > 0. 
\end{equation} 
\end{lemma}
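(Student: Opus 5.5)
The plan is to prove \eqref{eq:inf_sup_sigma_h_v_h} with a Fortin-type argument that uses the surjectivity $\div \Sigma_h = \text{BDM}_{k-1} = V_h$ from \eqref{eq:div_Sigma_h}. For a given $\bv_h \in V_h$, $\bv_h \neq \bm 0$, I would construct $\sig_h \in \Sigma_h$ with
\begin{equation*}
\div \sig_h = \bv_h, \qquad \Vert \sig_h \Vert_\Sigma \le C \Vert \bv_h \Vert ,
\end{equation*}
where $C$ is independent of $h$. Taking this $\sig_h$ in the supremum gives
\begin{equation*}
\frac{(\div \sig_h,\bv_h)}{\Vert \sig_h\Vert_\Sigma \Vert \bv_h\Vert} = \frac{\Vert \bv_h\Vert}{\Vert \sig_h\Vert_\Sigma} \ge C^{-1},
\end{equation*}
so one can take $\beta_3 = C^{-1}$.

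The first step is the continuous lift. By \eqref{eq:inf_sup_sigma_v}, or by the usual argument of solving an elasticity problem, there is $\sig \in \Sigma$ with $\div \sig = \bv_h$ and $\Vert \sig\Vert_\Sigma \le \beta_1^{-1}\Vert \bv_h\Vert$. The canonical interpolant defined by the DOF \eqref{DOF0}--\eqref{DOF3} and the interior moments is not bounded on $\Sigma$, because the vertex values and the lower-dimensional face moments need additional regularity. I would therefore take a more regular lift. For a suitable domain, for example a convex polytope or one on which elasticity has $H^2$-type regularity in the pure traction or mixed setting, choose $\sig = C\eps(\bphi)$ with $\bphi$ solving $\div C\eps(\bphi) = \bv_h$. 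This gives $\sig \in H^1(\Omega;\mathbb{S})$ with $\Vert \sig\Vert_1 \le C\Vert \bv_h\Vert$. If that regularity is not available, I would instead extend $\bv_h$ by zero to a ball and solve there, which avoids boundary regularity issues.

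Next I need an interpolation $\Pi_h: H^1(\Omega;\mathbb{S})\cap\cdots \to \Sigma_h$ satisfying the commuting property $\div \Pi_h \sig = \Pi_h^{\text{BDM}} \div \sig$. Chen--Huang establish this property with their DOF, since the face DOF \eqref{DOF3} and the $\div$ interior moments are exactly the BDM DOF of $\div\sig$. Because $\div\sig = \bv_h \in \text{BDM}_{k-1}$, this yields $\div \Pi_h\sig = \bv_h$.

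The main obstacle is stability. The vertex DOF \eqref{DOF0} and the $s\ge 2$ subsimplex DOF \eqref{DOF1} are not defined for $H^1$ tensors in $d\ge2$. To get around this, I would build the operator in the standard Fortin way. First apply a Scott--Zhang or Clément type quasi-interpolant $I_h$ into the $H^1$-conforming symmetric Lagrange subspace of $\Sigma_h$, which is stable in $H^1$; such a subspace exists because $k\ge3$ and it contains the continuous $\P_k$ symmetric tensors. Then add a correction from the bubble-plus-face part: $\Pi_h\sig = I_h\sig + \Pi_h^c(\sig - I_h\sig)$. Here $\Pi_h^c$ uses only the DOF \eqref{DOF3} and the interior $\div$ moments, and sets all vertex and subsimplex DOF to zero. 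These functionals are well defined for $H^1$ functions whose divergence lies in $L^2$. A scaling argument on the reference element, combined with local approximation bounds for $I_h$, gives $\Vert \Pi_h^c(\sig-I_h\sig)\Vert_{\Sigma} \lesssim \Vert \sig\Vert_1 + \Vert \div\sig\Vert$. Summing these estimates yields $\Vert \Pi_h \sig\Vert_\Sigma \le C\Vert\sig\Vert_1 \le C\Vert\bv_h\Vert$. The commuting property survives because the correction reproduces exactly the BDM moments of $\div\sig$, and this completes the proof.
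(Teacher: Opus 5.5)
Your route differs from the paper's. The paper uses no Fortin operator. It takes a lift $\sig\in H^2(\Omega;\mathbb{S})$ with $\div\sig=\bv_h$ and $\Vert\sig\Vert_2\lesssim\Vert\bv_h\Vert$ (citing Costabel--McIntosh), sets $\sig_h=\Pi_h\sig$ with the canonical interpolant, and concludes from $\div\Pi_h\sig=\Pi_h^{\div}\div\sig=\bv_h$. Your concern about regularity is justified. That lifting maps $H^s$ to $H^{s+1}$, and $\bv_h\in\text{BDM}_{k-1}$ is in general not in $H^1$, so only an $H^1$ bound in terms of $\Vert\bv_h\Vert$ is available. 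The replacement you propose, however, breaks down.

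\textbf{The decisive gap.} You claim that the continuous piecewise $\P_k$ symmetric tensors form a subspace of $\Sigma_h$. Your template (an $H^1$ quasi-interpolant into continuous tensors plus a local correction) is the one used for the Hu--Zhang $H(\div,\Omega;\mathbb{S})$ element, where continuous tensors are admissible. Here they are not. Single-valuedness of \eqref{DOF3} means that $\bn^T\div\btau$ is continuous across interior faces, i.e.\ $\div\btau\in H(\div)$. Continuous tensors lack this property in general: for $\btau=\phi I$ with $\phi$ a continuous piecewise polynomial, $\div\btau=\nabla\phi$ has a jumping normal component. Consequences:
\begin{itemize}
\item $I_h\sig\notin\Sigma_h$;
\item the \eqref{DOF3}-moments of $\sig-I_h\sig$ are two-valued on interior faces, so $\Pi_h^c(\sig-I_h\sig)$ is not well defined;
\item $\Pi_h\sig\notin\Sigma_h$.
\end{itemize}
Restricting $I_h$ to $\Sigma_h\cap H^1$ would need a local, $H^1$-stable quasi-interpolant onto a space carrying a $C^1$-type constraint on normal derivatives. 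That is not standard, and you do not supply it.

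\textbf{An independent problem with commutation.} For $k\ge 3$ the interior $\text{BDM}_{k-1}$ DOFs include moments against constant vectors $\bm c$. By integration by parts, $(\div\btau,\bm c)_K=(\btau\bn,\bm c)_{\partial K}$. This quantity is fixed by the trace DOFs \eqref{DOF0}--\eqref{DOF2}, not by \eqref{DOF3} or the interior moments; that is why the interior DOFs are taken modulo $\P_0$ and $\P_1$. Your $\Pi_h^c$ matches only \eqref{DOF3} and interior moments, so its trace DOFs vanish and $(\div\Pi_h^c\bm\eta,\bm c)_K=0$. In contrast, $(\div\bm\eta,\bm c)_K=(\bm\eta\bn,\bm c)_{\partial K}$ for $\bm\eta=\sig-I_h\sig$ is generally nonzero. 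Hence $\div\Pi_h\sig\neq\bv_h$ unless $I_h$ also preserves the face moments of $\sig\bn$ against constants.

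\textbf{A cleaner use of the Hu--Zhang idea.} Every Hu--Zhang function (continuous part plus $H(\div)$ bubbles) has single-valued DOFs \eqref{DOF0}--\eqref{DOF2}. So a Hu--Zhang tensor $\btau$ with $\div\btau=\bv_h\in\text{BDM}_{k-1}$ has normal-continuous divergence and therefore lies in $\Sigma_h$. The uniform Hu--Zhang inf-sup condition then yields the lemma, at least for $k\ge d+1$.
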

\begin{proof}
Let both
%$\Pi_h: \boldsymbol{H}^2(\Omega; \mathbb{S})\rightarrow \Sigma_h$ 
$\Pi_h: H^2(\Omega;\mathbb{S})\rightarrow \Sigma_h$
%and $\Pi_h^{\div}: \boldsymbol{H}^1(\Omega)\rightarrow \text{BDM}_{k-1}$
and $\Pi_h^{\div}: H^1(\Omega;\mathbb{V})\rightarrow \text{BDM}_{k-1}$
be the canonical interpolations. Then by the definition of the DOF of the finite element space $\Sigma_h$ and $\text{BDM}_{k-1}$, we have that for any
%$\sig\in \boldsymbol{H}^2(\Omega;\mathbb{S})$, 
$\sig\in H^2(\Omega; \mathbb{S})$, 
\begin{equation}\label{commu_Idnetity}
\div \Pi_h \sig=\Pi_h^{\div}\div \sig. 
\end{equation}
For any $\bv_h \in V_h =\text{BDM}_{k-1}$, there exists
%$\sig \in \boldsymbol{H}^2(\Omega;\mathbb{S})$
$\sig \in H^2(\Omega;\mathbb{S})$
such that \cite{costabel2010bogovskiui}
$$
\div{\sig}=\bv_h, \qquad\Vert \sig \Vert_2\lesssim \Vert \bv_h\Vert. 
$$
Now by choosing $\sig_h=\Pi_h\sig$ and \eqref{commu_Idnetity}, we have 
\begin{equation}\label{inf1}
\div\sig_h=\div\Pi_h\sig=\Pi_h^{\div}\div \sig=\Pi_h^{\div} \bv_h=\bv_h. 
\end{equation}
Further, by the approximation property of the interpolation operator $\Pi_h$, we have 
\begin{subequations}\label{inf2}
\begin{align}
\Vert \sig_h\Vert_\Sigma&=\Vert \Pi_h\sig\Vert_\Sigma \le \Vert \Pi_h\sig-\sig\Vert_\Sigma+\Vert \sig\Vert_\Sigma\\
&\lesssim h \Vert \sig\Vert_2+\Vert \sig\Vert_2\lesssim  (1+h) \Vert \sig\Vert_2 \lesssim \Vert\bv_h\Vert.
\end{align}
\end{subequations}
Combing \eqref{inf1} and \eqref{inf2}, we obtain the inf-sup condition \eqref{eq:inf_sup_sigma_h_v_h}. 
\end{proof}
{ The second basic discrete inf-sup-condition is well known, see, e.g.,~\cite{Boffi2013mixed}, and can be expressed in the form: There exists a constant $\beta_4$ which does not depend on the mesh size $h$ such that}
\begin{equation}\label{eq:inf_sup_w_h_q_h}
\exists \beta_{4} > 0: \quad \inf_{\substack{q_h \in P_h \\ q_h \neq 0}} \sup_{\substack{\bw_h \in V_h \\ \bw_h \neq \bm 0}} 
\frac{(\div \bw_h,q_h)}{\Vert \div \bw_h \Vert_{H(\div,\Omega)} \Vert q_h \Vert} \ge \beta_{4} > 0 .
\end{equation} 

With the conditions~\eqref{eq:inf_sup_sigma_h_v_h} and~\eqref{eq:inf_sup_w_h_q_h} at hand, we are able to establish the well-posedness of problem~\eqref{eq:Biot_fem}.
\begin{theorem}\label{thm:well-posedness_h}
Problem~\eqref{eq:Biot_fem} is well-posed.
% i.e., it has a unique solution $(\sig_h,(\bu_h,\bw_h),p_h)^T \in X_h$. 
{
In particular, the continuity estimate~\eqref{eq:boundedness} holds on $X_h \times X_h$ and the bilinear form 
$A ((\cdot,(\cdot,\cdot),\cdot),(\cdot,(\cdot,\cdot),\cdot))$ satisfies the inf-sup condition
%\begin{align}
%\mathcal A ((\tsig_h,(\tbu_h,\tbw_h),\tp_h),(\tau_h,(\bv_h,\bz_h),q_h))\le C_1  \left(\Vert \tsig_h \Vert_\Sigma + \Vert (\tbu_h,\tbw_h)^T \Vert_{\bar{V}} + \Vert \tp_h \Vert\right) (\Vert \bm \tau_h \Vert_\Sigma + \Vert (\bv_h,\bz_h)^T \Vert_{\bar{V}} + \Vert q_h \Vert)
%\end{align}
%for all $(\tsig_h,(\tbu_h,\tbw_h),\tp_h)^T,(\tau_h,(\bv_h,\bz_h),q_h)^T \in X_h$ and
\begin{align}\label{inf-sup-dis}
\sup_{\substack{\bm \tau_h \in \Sigma_h, \\ (\bv_h,\bz_h)\in \bar{V}_h, \\ q_h\in P_h}}\frac{\mathcal A ((\tsig_h,(\tbu_h,\tbw_h),\tp_h),(\bm \tau_h,(\bv_h,\bz_h),q_h))}{\Vert \bm \tau_h \Vert_\Sigma + \Vert (\bv_h,\bz_h)^T \Vert_{\bar{V}} + \Vert q_h \Vert}\ge c_1 \left(\Vert \tsig_h \Vert_\Sigma + \Vert (\tbu_h,\tbw_h)^T \Vert_{\bar{V}} + \Vert \tp_h \Vert\right),
\end{align}
{ for all  $(\sig_h,(\bu_h,\bw_h),p_h)^T \in X_h$}. The constants $C_1$  and $c_1$ neither depend on the
parameter $\lambda$ nor on the mesh size $h$. 
}
\end{theorem}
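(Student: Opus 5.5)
The plan is to mirror the proof of Theorem~\ref{thm:well-posedness} at the discrete level, with \eqref{eq:inf_sup_sigma_v} and \eqref{eq:inf_sup_w_q} replaced by Lemma~\ref{lem:inf_sup} and \eqref{eq:inf_sup_w_h_q_h}. Continuity on $X_h\times X_h$ needs no new work. It follows directly from \eqref{eq:boundedness}, because $X_h\subset X$: by \eqref{eq:conformity_Sigma_h} we have $\Sigma_h\subset\Sigma$, and both components of $\bar V_h$ lie in BDM, hence in $H(\div)$. The bound is therefore inherited with the same constant. Since $\mathcal A$ is symmetric, the whole argument reduces to the following: for an arbitrary $\by_h=(\btau_h,(\bv_h,\bz_h),q_h)^T\in X_h$, construct $\tbx_{0,h}\in X_h$ satisfying the discrete analogues of \eqref{eq:boundedness_estimate} and \eqref{eq:coercivity_estimate} with $h$-independent constants $c_1,c_2$. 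The same chain of inequalities as in \eqref{eq:continuous_inf_sup} then gives \eqref{inf-sup-dis}, and in finite dimensions this also yields unique solvability.

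\textbf{Step 1: discrete lifts.} Lemma~\ref{lem:inf_sup}, together with the exact identity \eqref{inf1} from its proof, supplies $\sig_{0,h}\in\Sigma_h$ with $\div\sig_{0,h}=\bv_h$ and $\Vert\sig_{0,h}\Vert_\Sigma\lesssim\Vert\bv_h\Vert$. Here I use that $\div\Sigma_h=V_h$ by \eqref{eq:div_Sigma_h}. From \eqref{eq:inf_sup_w_h_q_h}, together with the fact that $\div\,\mathrm{BDM}_{k-1}=P_h$ (discontinuous $\P_{k-2}$), I obtain $\bw_{0,h}\in V_h$ with $\div\bw_{0,h}=q_h$ and $\Vert\bw_{0,h}\Vert_{H(\div)}\le\beta_4^{-1}\Vert q_h\Vert$.

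\textbf{Step 2: the test function.} I then define, exactly as in \eqref{eq:def_x_tilde},
\[
\tsig_h=\sig_{0,h}+\delta_1\btau_h,\quad \tbu_h=-\delta_1\bv_h+\div\btau_h,\quad \tbw_h=\bw_{0,h}-\div\btau_h-\delta_1\bz_h,\quad \tp_h=\delta_1q_h+\div(\bv_h+\bz_h).
\]
The crucial admissibility check is that every component lies in the correct discrete space. Indeed, $\div\btau_h\in\mathrm{BDM}_{k-1}=V_h$ by \eqref{eq:div_Sigma_h}, so $\tbu_h,\tbw_h\in V_h$. Also $\div(\bv_h+\bz_h)\in\div\,\mathrm{BDM}_{k-1}\subset P_h$, so $\tp_h\in P_h$. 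The boundary conditions encoded in $\Sigma_h^0$ and $\bar V_h^0$ must be preserved as well. The point needing care is that $\div\btau_h\cdot\bn$ vanishes on $\Gamma_{p,N}$, so that $\tbw_h\in\bar V_h^0$. I expect this to follow when the boundary parts are compatible, or otherwise from choosing the lifts in the subspaces with the appropriate homogeneous traces. This space-compatibility step is the main obstacle, and it is exactly where the $H(\div\div)\cap H(\div)$ structure of $\Sigma_h$ enters.

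\textbf{Step 3: the estimates.} Once admissibility is settled, the bound $\Vert\tbx_{0,h}\Vert_X^2\le c_1\Vert\by_h\Vert_X^2$ follows from the triangle inequality and Step~1. Here $c_1$ depends only on $\delta_1$, $\beta_3$ and $\beta_4$. For the coercivity I expand $\mathcal A(\tbx_{0,h},\by_h)$. The exactness $\div\sig_{0,h}=\bv_h$ and $\div\bw_{0,h}=q_h$ produces the same algebraic identity as in the continuous proof, with positive terms
\[
\delta_1(A\btau_h,\btau_h)+\Vert\div\btau_h\Vert^2+\Vert\bv_h\Vert^2+\delta_1(R_p^{-1}\bz_h,\bz_h)+\Vert\div(\bv_h+\bz_h)\Vert^2+\Vert q_h\Vert^2+\delta_1(Sq_h,q_h)
\]
and cross terms that Young's inequality absorbs. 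To keep the constants $\lambda$-independent, the term $\delta_1(A\btau_h,\btau_h)$ alone does not control $\Vert\btau_h\Vert^2$ uniformly in $\lambda$. I would therefore bound the trace part of $\btau_h$ using $\Vert\mathrm{dev}\,\btau_h\Vert$ and $\Vert\div\btau_h\Vert$, via the standard estimate $\Vert\mathrm{tr}\,\btau\Vert\lesssim\Vert\mathrm{dev}\,\btau\Vert+\Vert\div\btau\Vert$. This estimate holds on all of $\Sigma$ when $\Gamma_{\bu,N}\neq\emptyset$; in the pure-displacement case it holds modulo multiples of $I$. Because this estimate holds on all of $\Sigma$, it applies to $\Sigma_h$ with the same constant, so no mesh dependence enters. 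Choosing the $\epsilon_i$ small and $\delta_1$ large then gives $c_2>0$ independent of $h$ and $\lambda$, which completes the argument.
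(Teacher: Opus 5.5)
Your construction is the paper's own: the same lifts $\sig_{0,h}$ and $\bw_{0,h}$, the same four-component test function, and the same expansion with Young absorption. Those steps are correct, including membership in $V_h$ and $P_h$ via $\div\Sigma_h=\mathrm{BDM}_{k-1}$. The gap is the admissibility step you flagged yourself, and neither of your suggested remedies closes it.

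The term $-\div\btau_h$ in $\tbw_h$ is built from the given test function, so no choice of lifts affects it. Nothing in $\Sigma_h^0$ forces $(\div\btau_h)\cdot\bn=0$ on $\Gamma_{p,N}$. If $\Gamma_{p,N}$ is disjoint from $\Gamma_{\bu,N}$, nothing constrains this normal trace at all. Even a vanishing traction does not imply it: with $\tau_{11}=x_1$ and all other entries zero, $\btau\bn=\bm 0$ on $\{x_1=0\}$ but $\div\btau=(1,0,\dots)^T$.

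Nor can you replace $-\div\btau_h$ by some $\bm r_h\in V_h$ with $\bm r_h\cdot\bn=0$ on $\Gamma_{p,N}$, $\div\bm r_h=\div\div\btau_h$ and $\Vert\bm r_h\Vert\lesssim\Vert\div\btau_h\Vert$ uniformly in $h$. The field $\div\btau_h-\bm r_h$ would then be divergence-free with normal trace $(\div\btau_h)\cdot\bn$ on $\Gamma_{p,N}$. This would give $\Vert(\div\btau_h)\cdot\bn\Vert_{H^{-1/2}(\Gamma_{p,N})}\lesssim\Vert\div\btau_h\Vert$, which fails for fields concentrated in the layer of boundary elements (the ratio grows like $h^{-1/2}$).

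So the argument, yours and the paper's (which simply asserts $\tbx_{h,0}\in X_h$), only covers $\Gamma_{p,N}=\emptyset$. Similarly, $\tsig_h\in\Sigma_h^0$ requires a lift with vanishing traction on $\Gamma_{\bu,N}$. Lemma~\ref{lem:inf_sup} is stated without boundary conditions and does not provide one.

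On $\lambda$: you are right that $\delta_1(A\btau_h,\btau_h)$ does not control $\mathrm{tr}\,\btau_h$ uniformly in $\lambda$; the paper's proof passes over this. However, your fix fails exactly in the case where the construction is admissible.

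If $\Gamma_{\bu,N}=\emptyset$, then $I\in\Sigma_h^0$ and $AI=\gamma_\lambda I$ with $\gamma_\lambda=O(\lambda^{-1})$; for the intended compliance tensor, $\gamma_\lambda=(d\lambda+2\mu)^{-1}$. Take $\bx_h=(I,(\bm 0,\bm 0),0)^T$. For every $\by_h\in X_h$ one has $\mathcal A(\bx_h,\by_h)=\gamma_\lambda\int_\Omega\mathrm{tr}\,\btau_h\le\gamma_\lambda\Vert\bx_h\Vert_X\Vert\by_h\Vert_X$. Hence the discrete inf-sup constant is at most $\gamma_\lambda$. ``Modulo multiples of $I$'' is therefore not a harmless caveat.

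A $\lambda$-uniform bound requires factoring this mode out. For example, impose $\int_\Omega\mathrm{tr}\,\btau_h=0$ on both trial and test stresses, and correct $\sig_{0,h}$ by a multiple of $I$, which leaves its divergence unchanged. After that, your trace--deviator argument goes through.

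If instead $|\Gamma_{\bu,N}|>0$, the estimate $\Vert\mathrm{tr}\,\btau\Vert\lesssim\Vert\mathrm{dev}\,\btau\Vert+\Vert\div\btau\Vert$ holds on $\Sigma^0$. It does not hold on all of $\Sigma$, where $I$ is again a counterexample. In that case you again need the traction-respecting discrete lift mentioned above.

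As it stands, your argument proves $h$-uniform stability only for $\Gamma_{\bu,N}=\Gamma_{p,N}=\emptyset$, with a constant that depends on $\lambda$ unless the $I$-mode is removed.
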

\begin{proof}
The boundedness of the bilinear form $A ((\cdot,(\cdot,\cdot),\cdot),(\cdot,(\cdot,\cdot),\cdot))$, i.e., the estimate~\eqref{eq:boundedness}, on $X_h \times X_h$ follows from the conformity of the method, i.e., the fact that $X_h \subset X$, together with~\eqref{eq:boundedness}.

{ To repeat the chain of arguments from the proof of Theorem~\ref{thm:well-posedness}, we have to show that for an arbitrary element $\by_h = (\bm \tau_h,(\bv_h,\bz_h),q_h)^T \in X_h$ we can find an element $\tbx_{h,0} = (\tsig_h,(\tbu_h,\tbw_h), \tp_h)^T \in X_h$ such that the estimates}
\begin{equation}\label{eq:boundedness_estimate_h}
\Vert \tsig_h \Vert_\Sigma^2 + \Vert (\tbu_h,\tbw_h)^T \Vert_{\bar{V}}^2 + \Vert \tp_h \Vert^2 
\le c_{3} (\Vert \bm \tau_h \Vert_\Sigma^2 + \Vert (\bv_h,\bz_h)^T \Vert_{\bar{V}}^2 + \Vert q_h \Vert^2)
\end{equation}
and
\begin{equation}\label{eq:coercivity_estimate_h}
\mathcal A ((\tsig_h,(\tbu_h,\tbw_h),\tp_h),(\tau_h,(\bv_h,\bz_h),q_h)) 
\ge c_{4} (\Vert \bm \tau_h \Vert_\Sigma^2 + \Vert (\bv_h,\bz_h)^T \Vert_{\bar{V}}^2 + \Vert q_h \Vert^2) 
\end{equation}
hold true for positive constants $c_{3}$ and $c_{4}$.
%because these two conditions imply the inf-sup condition
%%
%\begin{align}\label{eq:discrete_inf_sup}
%\sup_{\bx_h \in X_h} \frac{\mathcal{A}(\bx_h^T,\by_h^T)}{\Vert \bx_h \Vert_{X}}
%& \ge  \frac{\mathcal{A}(\tbx_h^T,\by_h^T)}{\Vert \tbx_h \Vert_{X}}
%\ge \frac{c_4 \Vert \by_h \Vert_{X}^2}{\Vert \tbx_h \Vert_{X}}
%\ge \frac{c_4}{\sqrt{c_3}} \Vert \by_h \Vert_{X} , \mbox{ for all } \by_h \in X_h
%%= (\tau_h,(\bv_h,\bz_h),q_h)^T \in X_h
%\end{align}
%%
%that we have to prove.
%
So let $\by_h = (\bm \tau_h,(\bv_h,\bz_h),q_h)^T \in X_h$ be arbitrary but fixed.
Then, due to inf-sup condition~\eqref{eq:inf_sup_sigma_h_v_h} there exists an element $\sig_{0,h} \in \Sigma_h$ such that
\begin{equation}\label{eq:def_sig_0}
\div \sig_{0,h} = \bv_h \mbox{ and } \Vert \sig_{0,h} \Vert_\Sigma \le \beta_{3}^{-1} \Vert \bv \Vert ,
\end{equation}
and due to~\eqref{eq:inf_sup_w_h_q_h}
%the inf-sup condition~\eqref{eq:inf_sup_w_h_q_h}
an element $\bw_{0,h} \in V_h$ such that
%there exists an element $\bw_{0,h} \in V_h$ such that
%
\begin{equation}\label{eq:def_w_0}
\div \bw_{0,h} = q \mbox{ and } \Vert \bw_{0,h} \Vert_{H(\div,\Omega)} \le \beta_{4}^{-1} \Vert q_h \Vert .
\end{equation}
Analogously to the proof of Theorem~\ref{thm:well-posedness}, we define $\tbx_{h,0} := (\tsig_h,(\tbu_h,\tbw_h), \tp_h)^T \in X_h$ in terms of
\begin{subequations}\label{eq:def_x_tilde_h}
\begin{align}   
\tsig_h & := \sig_{0,h} + \delta_2 \btau_h , \label{eq:def_sig_tilde} \\
\tbu_h & := - \delta_2 \bv_h + \div \btau_h , \label{eq:def_u_tilde}  \\
\tbw_h & := \bw_{0,h} - \div \btau_h - \delta_2 \bz_h , \label{eq:def_w_tilde}  \\
%%(\tbu,\tbw)
\tp_h & : = \delta_2 q_h + \div ( \bv_h + \bz_h ) . \label{eq:def_p_tilde}
\end{align}
\end{subequations}
such that
$$
\tbu_h + \tbw_h = - \delta_2 \bv_h + \bw_{0,h} - \delta_2 \bz_h 
$$
for a positive constant $\delta_2 < \infty$, which we are free to choose. Then, in the same way as in the proof of Theorem~\ref{thm:well-posedness}, we conclude
\begin{align*} 
\Vert \tsig_h \Vert_\Sigma^2 + \Vert (\tbu_h,\tbw_h)^T \Vert_{\bar{V}}^2 + \Vert \tp_h \Vert^2 
& \le c_{3} (\Vert \bm \tau_h \Vert_\Sigma^2 + \Vert (\bv_h,\bz_h)^T \Vert_{\bar{V}}^2 + \Vert q_h \Vert^2)
\end{align*}
where $c_{3}$ depends only on $\delta_2$.

On the other hand, for the above choice~\eqref{eq:def_x_tilde_h} of $\tbx_{h,0} = (\tsig_h,(\tbu_h,\tbw_h), \tp_h)^T$, by following exactly the same steps as in the proof of Theorem~\ref{thm:well-posedness}, we obtain the estimate~\eqref{eq:coercivity_estimate_h} for some positive constant~$c_{4}$. 

{
None of the constants depend on $\lambda$ (for the same reason as in
Theorem~\ref{thm:well-posedness}) and none of them depend on $h$ because neither $\beta_3$ nor $\beta_4$ depend
on $h$. This completes the proof.}
\end{proof}

\begin{remark}
Note that the constants $c_{3}$ and $c_{4}$ in the proof of Theorem~\ref{thm:well-posedness_h} in general differ from $c_1$ and $c_2$ used in the proof of Theorem~\ref{thm:well-posedness} because they depend on the discrete inf-sup constants $\beta_{3}$ and $\beta_{4}$ in~\eqref{eq:inf_sup_sigma_h_v_h} and~\eqref{eq:inf_sup_w_h_q_h}, which in general differ from the constants in~\eqref{eq:inf_sup_sigma_v} and~\eqref{eq:inf_sup_w_q}.
\end{remark}

\begin{remark}
The conformity property \eqref{eq:conformity_Sigma_h}
that the space $\Sigma_h$ satisfies by construction, cf.~\eqref{eq:Sigma_h}--\eqref{eq:div_Sigma_h}, is essential because we want to choose $\tbu_h$ and $\tbw_h $ in $H(\div , \Omega ; \mathbb{V})$, i.e., in $\text{BDM}_{k-1}$, and hence, in order to make them well-defined through the relations~\eqref{eq:def_u_tilde}--\eqref{eq:def_w_tilde},
which resembles the construction on the continuous level, we require the condition $\div \btau_h \subset \text{BDM}_{k-1} \subset H(\div , \Omega ; \mathbb{V})$, i.e., the condition~\eqref{eq:conformity_Sigma_h}.
\end{remark}

\begin{remark}
As the mixed-mixed form of the Stokes problem is a special case of problem \eqref{eq:Biot_weak}, see Remark \ref{Biot_Stokes},  the discretization method developed in this section can also be applied to the Stokes problem. Similar error estimates as those presented in the next subsection are also valid for the corresponding discretization of the Stokes problem. 
\end{remark}

\subsection{A priori error estimates}\label{subsec:fem_error_estimate} 

%The stability estimates from the previous section, i.e., the discrete inf-sup conditions~\eqref{eq:inf_sup_sigma_h_v_h} and~\eqref{eq:inf_sup_w_h_q_h},
The stability estimates from the previous section, in particular the discrete inf-sup condition~\eqref{inf-sup-dis}, not only provide a basis for well-posedness of the formulation~\eqref{eq:Biot_fem} but also allow us to prove a priori error estimates,
as presented in this section.

We start our error analysis with proving a near-best approximation result that is stated in the following Theorem~\ref{thm:near_best_approx}. This theorem includes a suboptimal a priori error estimate as well, which we will improve in a second step by exploiting the specific properties of the canonical interpolation operator
into the BDM space together with known interpolation error estimates. The resulting optimal a priori error
estimates are then presented in the subsequent Theorem~\ref{thm:near_best_approx0}.

\begin{theorem}\label{thm:near_best_approx}
Let $X:= \Sigma^0 \times \bar{V}^0 \times L^2(\Omega)$ and $X_h$ be defined by~\eqref{eq:X_h}.
Then there exists a constant $C >0$ such that the solutions $(\sig_h,(\bu_h,\bw_h),p_h)^T \in X_h$ of
Problem~\eqref{eq:Biot_fem} and $(\sig,(\bu,\bw),p)^T \in X$ of Problem~\eqref{eq:Biot_weak} satisfy
the estimate
\begin{align}\label{eq:near_best_approx}
\Vert \sig - \sig_h \Vert_{H(\div,\Omega;\mathbb{S})} &+ \Vert \bar{\bu} - \bar{\bu}_h \Vert_{\bar{V}} + \Vert p - p_h \Vert  \nonumber  \\
\le &C \inf_{\btau_h \in \Sigma_h,\bar{\bv}_h \in \bar{V}_h, q_h \in Q_h}
  \left(
\Vert \sig - \btau_h \Vert_{H(\div,\Omega;\mathbb{S})} \right. \left.+ \Vert \bar{\bu} - \bar{\bv}_h \Vert_{\bar{V}} + \Vert p - q_h \Vert
\right).
\end{align} 
Furthermore, we have 
\begin{align}\label{eq:near_best_approx_suboptimal}
 \Vert \bar{\bu} - \bar{\bu}_h \Vert_{\bar{V}} +\Vert p - p_h \Vert \le C h^{k-1}\left(h^2|\sig|_k+h|\div\sig|_k+h|\bar{\bu}|_k+|\div\bar{\bu}|_{k-1}+|p|_{k-1}\right).
 \end{align}
\end{theorem}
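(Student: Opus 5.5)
The near-best approximation estimate \eqref{eq:near_best_approx} will follow from a standard Babu\v{s}ka-type argument built on the discrete inf-sup condition \eqref{inf-sup-dis} of Theorem~\ref{thm:well-posedness_h} together with the boundedness \eqref{eq:boundedness}. The method is conforming, $X_h\subset X$, and both problems share the right-hand side $\mathcal F$. Subtracting \eqref{eq:Biot_fem} from \eqref{eq:Biot_weak} tested with $\by_h\in X_h$ therefore gives Galerkin orthogonality,
$$
\mathcal A(\bx-\bx_h,\by_h)=0 \quad \forall \by_h\in X_h .
$$
For arbitrary $\bz_h=(\btau_h,\bar{\bv}_h,q_h)^T\in X_h$, I will apply \eqref{inf-sup-dis} to $\bx_h-\bz_h\in X_h$ and obtain
$$
c_1\Vert \bx_h-\bz_h\Vert_X \lesssim \sup_{\by_h}\frac{\mathcal A(\bx_h-\bz_h,\by_h)}{\Vert\by_h\Vert_X}=\sup_{\by_h}\frac{\mathcal A(\bx-\bz_h,\by_h)}{\Vert\by_h\Vert_X}\le C_0\Vert \bx-\bz_h\Vert_X .
$$
The triangle inequality then gives \eqref{eq:near_best_approx} with $C=1+C_0/c_1$, which is independent of $h$ and $\lambda$. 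One technical point is the boundary constraints in $\Sigma^0_h$ and $\bar V^0_h$. I will take the infimum over the constrained spaces, and the canonical interpolants used below preserve the homogeneous normal traces.

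For \eqref{eq:near_best_approx_suboptimal}, I will bound the infimum by inserting specific interpolants: $\btau_h=\Pi_h\sig$, $\bar{\bv}_h=(\Pi_h^{\div}\bu,\Pi_h^{\div}\bw)$, and $q_h=Q_h p$, the $L^2$ projection onto $P_h$. The key tool is the commuting property \eqref{commu_Idnetity}, which gives $\div(\sig-\Pi_h\sig)=\div\sig-\Pi_h^{\div}\div\sig$. Standard BDM$_{k-1}$ interpolation then yields $\Vert\div(\sig-\Pi_h\sig)\Vert\lesssim h^{k}|\div\sig|_k$. For the $L^2$ part of the stress error I will use the approximation property of the degree-$k$ space $\Sigma_h$, namely $\Vert\sig-\Pi_h\sig\Vert\lesssim h^{k+1}|\sig|_{k+1}$. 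I will need to check exactly which seminorm matches the stated $h^{k+1}|\sig|_k$. The intent is presumably the weaker scaling, and I would simply use whatever bound the $\Pi_h$ estimates of \cite{ChenHuang2022} provide.

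For the displacement--flux pair, $\Vert\bu-\Pi_h^{\div}\bu\Vert\lesssim h^{k}|\bu|_k$, and similarly for $\bw$. The BDM commuting diagram gives $\div\Pi_h^{\div}=Q_h\div$ onto $\P_{k-2}$, so
$$
\Vert\div(\bar{\bu}-\bar{\bv}_h)\Vert=\Vert(I-Q_h)\div(\bu+\bw)\Vert\lesssim h^{k-1}|\div(\bu+\bw)|_{k-1}.
$$
Finally, $\Vert p-Q_hp\Vert\lesssim h^{k-1}|p|_{k-1}$. Collecting these bounds and factoring out $h^{k-1}$ gives the right-hand side of \eqref{eq:near_best_approx_suboptimal}. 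The estimate is only suboptimal on the left because the full $X$-norm error is bounded, and the $L^2$ errors of $\bu,\bw$ are thereby polluted by the order-$(k-1)$ divergence and pressure terms.

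I expect the main obstacle to be a regularity subtlety rather than the abstract argument. $\Pi_h$ and $\Pi_h^{\div}$ are only defined for sufficiently smooth arguments, $\sig\in H^2$ and $\bu,\bw\in H^1$. This is fine under the regularity implicitly assumed by the seminorms, but the interpolants must be checked to be well defined on $\bar V$, where only $\div(\bu+\bw)$ is controlled. I will therefore assume each of $\bu,\bw$ has the stated individual regularity, consistent with the $|\bar{\bu}|_k$ term. I will also verify that the interpolants respect the Neumann-type trace constraints, which they do because their normal-trace DOFs vanish when the traces vanish.
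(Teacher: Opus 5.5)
Your proposal is correct and follows essentially the same route as the paper: Galerkin orthogonality from conformity, the discrete inf-sup condition \eqref{inf-sup-dis} applied to the discrete difference, boundedness and the triangle inequality, then inserting the interpolants $\Pi_h\sig$, $\Pi_h^{\div}\bu$, $\Pi_h^{\div}\bw$, $Q_h p$ for the rate, a step the paper only asserts as ``standard interpolation error estimates.'' Your caution about the seminorm is justified, since $h^{k+1}|\sig|_k$ in \eqref{eq:near_best_approx_suboptimal} is evidently a typo for $h^{k+1}|\sig|_{k+1}$, and taking the infimum over the constrained spaces $\Sigma_h^0$, $\bar{V}_h^0$ is the rigorous form of what the paper does loosely (it picks $\sig_I\in\Sigma_h$ without checking the boundary constraint).
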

\begin{proof}
Subtracting~\eqref{eq:Biot_fem} from~\eqref{eq:Biot_weak} results in
\begin{subequations}\label{eq:Biot_error}
\begin{align}   
(A (\sig-\sig_h), \bm \tau_h) + ((\bu-\bu_h),\div \bm \tau_h) = &  0, \label{eq:Biot_error1}\\
(\div (\sig-\sig_h),\bv_h) + ((p-p_h), \div \bv_h) = & 0,  \label{eq:Biot_error2}\\  
- (R_p^{-1}  (\bw-\bw_h), \bz_h) + ((p-p_h),\div \bz_h) = &  0, \label{eq:Biot_error3} \\ 
(\div (\bu-\bu_h), q_h) + (\div (\bw-\bw_h), q_h) + (S (p-p_h),q_h) = & 0  \label{eq:Biot_error4} 
\end{align}
\end{subequations}
for all $(\bm \tau_h,(\bv_h,\bz_h),q_h)^T \in X_h$.

Now, let $\sig_I \in \Sigma_h$, $\bu_I \in V_h$, $\bw_I \in V_h$, and $p_I \in P_h$ be arbitrary. Then, in view of~\eqref{eq:Biot_error}, we have
\begin{subequations}\label{eq:Biot_error_est}
\begin{align}   
(A (\sig_I-\sig_h), \bm \tau_h) + (\bu_I-\bu_h,\div \bm \tau_h) = &  (A (\sig_I-\sig), \bm \tau_h) + (\bu-\bu_I,\div \bm \tau_h), \label{eq:Biot_error_est1}\\
(\div (\sig_I-\sig_h),\bv_h) + (p_I-p_h, \div \bv_h) = & (\div (\sig_I-\sig),\bv_h) + (p-p_I, \div \bv_h),  \label{eq:Biot_error_est2}\\  
- (R_p^{-1}  (\bw_I-\bw_h), \bz_h) + (p_I-p_h,\div \bz_h) = &  - (R_p^{-1}  (\bw_I-\bw), \bz_h) + (p-p_I,\div \bz_h), \label{eq:Biot_error_est3} \\ 
(\div (\bu_I-\bu_h), q_h) + (\div (\bw_I-\bw_h), q_h) + & (S (p_I-p_h),q_h) 
=  (\div (\bu-\bu_I), q_h) \nonumber \\
+ & (\div (\bw -\bw_I), q_h) + (S (p-p_I),q_h)  \label{eq:Biot_error_est4} 
\end{align}
\end{subequations}
for all $(\bm \tau_h,(\bv_h,\bz_h),q_h)^T \in X_h$.

With the definitions $\bar{\bu}_h :=(\bu_h,\bw_h)$, $\bar{\bv}_h :=(\bv_h,\bz_h)$ and $\bar{\bu}_I :=(\bu_I,\bw_I)$, $\bar{\bv}_I :=(\bv_I,\bz_I)$, from the identity~\eqref{eq:Biot_error_est}, by using the discrete inf-sup condition~\eqref{inf-sup-dis} and Cauchy-Schwarz inequality, we obtain
\begin{align*}
\Vert \sig_I - \sig_h \Vert_{H(\div,\Omega;\mathbb{S})} &+ \Vert \bar{\bu}_I - \bar{\bu}_h \Vert_{\bar{V}} +  \Vert p_I - p_h \Vert \\
\le &
c_1^{-1} \left\lvert
\frac{(A (\sig_I-\sig), \bm \tau_h) + (\bu-\bu_I,\div \bm \tau_h)}
{\Vert \btau_h \Vert_{H(\div,\Omega;\mathbb{S})}+ \Vert \bar{\bv}_h \Vert_{\bar{V}} + \Vert q_h \Vert} \right. 
+  \frac{(\div (\sig_I-\sig),\bv_h) + (p-p_I, \div \bv_h)}
{\Vert \btau_h \Vert_{H(\div,\Omega;\mathbb{S})}+ \Vert \bar{\bv}_h \Vert_{\bar{V}} + \Vert q_h \Vert}  \\
+ & \frac{-(R_p^{-1}  (\bw_I-\bw), \bz_h) + (p-p_I,\div \bz_h)}
{\Vert \btau_h \Vert_{H(\div,\Omega;\mathbb{S})}+ \Vert \bar{\bv}_h \Vert_{\bar{V}} + \Vert q_h \Vert}  \\
+ & \left. \frac{(\div (\bu-\bu_I), q_h) + (\div (\bw -\bw_I), q_h) + (S (p-p_I),q_h)}
{\Vert \btau_h \Vert_{H(\div,\Omega;\mathbb{S})}+ \Vert \bar{\bv}_h \Vert_{\bar{V}} + \Vert q_h \Vert} \right\rvert \\
\le &
c_1^{-1}  \left(
\frac{\lvert(A (\sig_I-\sig), \bm \tau_h)\rvert}{\Vert \btau_h \Vert_{H(\div,\Omega;\mathbb{S})}}
+ \frac{\lvert(\bu-\bu_I,\div \bm \tau_h)\rvert}{\Vert \btau_h \Vert_{H(\div,\Omega;\mathbb{S})}} \right. 
+  \frac{\lvert(\div (\sig_I-\sig),\bv_h)\rvert}{\Vert \bar{\bv}_h \Vert_{\bar{V}}}\\
+& \frac{\lvert(p-p_I, \div (\bv_h + \bz_h))\rvert}{\Vert \bar{\bv}_h \Vert_{\bar{V}}} 
+  \frac{\lvert(R_p^{-1}(\bw_I-\bw), \bz_h)\rvert}{\Vert \bar{\bv}_h \Vert_{\bar{V}}} \\
+ & \left. \frac{\lvert(\div (\bu-\bu_I) + \div (\bw-\bw_I), q_h)\rvert}{\Vert q_h \Vert}
+ \frac{\lvert (S (p-p_I),q_h)\rvert}{\Vert q_h \Vert} \right) \\
\le & c_1^{-1}C_0
\left(
\Vert \sig - \sig_I \Vert_{H(\div,\Omega;\mathbb{S})}
+ \Vert \bu - \bu_I \Vert_V 
+ R_p^{-1} \Vert \bw - \bw_I \Vert_V \right. \\
+ &  \left. \Vert \div (\bu-\bu_I) + \div (\bw-\bw_I) \Vert + (1+S) \Vert p-p_I \Vert \right) \\
\le &  c_1^{-1}C_0
 \left(
\Vert \sig - \sig_I \Vert_{H(\div,\Omega;\mathbb{S})}
+ (1+R_p^{-1}) \Vert \bar{\bu} - \bar{\bu}_I \Vert_{\bar{V }}
+ (1+S) \Vert p-p_I \Vert \right).
\end{align*}
Finally, by using the triangle inequality together with the previous estimate, we conclude
\begin{align*}
\Vert \sig - \sig_h \Vert_{H(\div,\Omega;\mathbb{S})} &+ \Vert \bar{\bu} - \bar{\bu}_h \Vert_{\bar{V}} + \Vert p - p_h \Vert\\
\le &
\Vert \sig - \sig_I \Vert_{H(\div,\Omega;\mathbb{S})} 
+ \Vert \sig_I - \sig_h \Vert_{H(\div,\Omega;\mathbb{S})} 
+  \Vert \bar{\bu} - \bar{\bu}_I \Vert_{\bar{V}} 
+ \Vert \bar{\bu}_I - \bar{\bu}_h \Vert_{\bar{V}} \\
+ & \Vert p - p_I \Vert
+ \Vert p_I - p_h \Vert \\
\le &
C \inf_{\sig_I \in \Sigma_h, \bar{\bu}_I \in \bar{V}_h, p_I \in Q_h} 
\left( \Vert \sig - \sig_I \Vert_{H(\div,\Omega;\mathbb{S})} \right.
+  \left. \Vert \bar{\bu} - \bar{\bu}_I \Vert_{\bar{V}}
+ \Vert p - p_I \Vert \right),
\end{align*} 
which proves the assertion~\eqref{eq:near_best_approx} because $\sig_I \in \Sigma_h$, $\bar{\bu}_I \in \bar{V}_h$, and $p_I \in P_h$ were arbitrary.
{
The bound~\eqref{eq:near_best_approx_suboptimal} then follows from standard interpolation error estimates.
}
\end{proof}

\begin{remark}
As we can see from the proof of Theorem~\ref{thm:near_best_approx}, the constant $C$ in the
bound~\eqref{eq:near_best_approx} does not depend on the Lam\'{e} parameters, which shows that the discretization
is locking-free.
\end{remark}

\begin{remark}
Choosing for $p_I$ the $L^2$ projection of $p$ onto $P_h$ and repeating the steps of the proof of Theorem~\ref{thm:near_best_approx}, due to cancellation of terms in the estimates, one can also
derive a bound of the form 
\begin{align}\label{eq:near_best_approx_2}
\Vert \sig - \sig_h \Vert_{H(\div,\Omega;\mathbb{S})} + \Vert \bar{\bu} - \bar{\bu}_h \Vert_{\bar{V}} 
\le C \inf_{\btau_h \in \Sigma_h, \bar{\bv}_h \in \bar{V}_h} 
&  \left(
\Vert \sig - \btau_h \Vert_{H(\div,\Omega;\mathbb{S})}+ \Vert \bar{\bu} - \bar{\bv}_h \Vert_{\bar{V}} \right).
\end{align} 
in a similar way.
\end{remark}

Let $Q_h$ be the $L^2$ projection from $L^2(\Omega)$ to $P_h$. Further, recall that $\Pi_h^{\div}$ is the canonical
interpolation to the BDM space. Then it is well known that $\div \Pi_h^{\div} \bu=Q_h\div \bu$. 

\begin{theorem}\label{thm:near_best_approx0}
There exists a constant $C >0$ such that the solutions $(\sig_h,(\bu_h,\bw_h),p_h)^T \in X_h$ of
Problem~\eqref{eq:Biot_fem} and $(\sig,(\bu,\bw),p)^T \in X$ of Problem~\eqref{eq:Biot_weak} satisfy the estimate
\begin{align}\label{eq:near_best_approx1}
\Vert \sig - \sig_h \Vert_{H(\div,\Omega;\mathbb{S})} &\nonumber+ \Vert \Pi_h^{\div} \bar{\bu} - \bar{\bu}_h \Vert_{\bar{V}} + \Vert Q_hp - p_h \Vert
\le \\
&C \left(\inf_{\btau_h \in \Sigma_h}  
\Vert \sig - \btau_h \Vert_{H(\div,\Omega;\mathbb{S})}+ \Vert \bu- \Pi_h^{\div} \bu \Vert+\Vert \bw- \Pi_h^{\div} \bw \Vert\right).
\end{align} 
Furthermore, we have 
\begin{align}\label{eq:near_best_approx2}
\Vert \sig - \sig_h \Vert_{H(\div,\Omega;\mathbb{S})}+ \Vert \Pi_h^{\div} \bar{\bu} - \bar{\bu}_h \Vert_{\bar{V}} + \Vert Q_hp - p_h \Vert
\le C h^k \left(|\sig|_{k}+|\div\sig|_{k}+|\bu|_{k}+|\bw|_k \right)
\end{align} 
and 
\begin{align}\label{eq:near_best_approx3}
\Vert \bar{\bu} - \bar{\bu}_h \Vert
\le C h^k \left(|\sig|_k+|\div\sig|_k+|\bu|_k+|\bw|_k \right).
\end{align}

\end{theorem}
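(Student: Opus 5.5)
The plan is to repeat the argument of Theorem~\ref{thm:near_best_approx}, but with the specific choices $\sig_I := \btau_h$ arbitrary in $\Sigma_h$, $\bar{\bu}_I := \Pi_h^{\div}\bar{\bu} = (\Pi_h^{\div}\bu,\Pi_h^{\div}\bw)$ and $p_I := Q_h p$. The point of these choices is that the terms which produced the suboptimal bound~\eqref{eq:near_best_approx_suboptimal} now vanish.

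\textbf{Step 1: the error equations~\eqref{eq:Biot_error_est} with these choices.}
\begin{itemize}
\item Since $\div \bv_h,\div \bz_h \in P_h$ (because $\div \text{BDM}_{k-1} = P_h$), we get $(p-Q_hp,\div \bv_h) = 0$ and $(p-Q_hp,\div \bz_h) = 0$.
\item Since $\div \Pi_h^{\div} = Q_h \div$, we get $(\div(\bu-\Pi_h^{\div}\bu) + \div(\bw-\Pi_h^{\div}\bw), q_h) = 0$ for all $q_h\in P_h$.
\item The term $(S(p-Q_hp),q_h)$ also vanishes for $q_h\in P_h$.
\item By~\eqref{eq:div_Sigma_h}, $\div\btau_h \in \text{BDM}_{k-1}$, but $\bu-\Pi_h^{\div}\bu$ need not be orthogonal to $\text{BDM}_{k-1}$. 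So the term $(\bu-\Pi_h^{\div}\bu,\div\btau_h)$ remains, and it is bounded by $\Vert \bu-\Pi_h^{\div}\bu\Vert\,\Vert\btau_h\Vert_\Sigma$. This is only an $L^2$ error, not a $\bar V$ error, which is exactly the gain.
\end{itemize}
What remains on the right-hand side is
$$
(A(\sig_I-\sig),\btau_h) + (\bu-\Pi_h^{\div}\bu,\div\btau_h) + (\div(\sig_I-\sig),\bv_h) - (R_p^{-1}(\Pi_h^{\div}\bw-\bw),\bz_h).
$$

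\textbf{Step 2: apply the discrete inf-sup condition.} Applying~\eqref{inf-sup-dis} to $(\sig_I-\sig_h,\Pi_h^{\div}\bar{\bu}-\bar{\bu}_h,Q_hp-p_h)$ and using Cauchy--Schwarz gives
$$
\Vert\sig_I-\sig_h\Vert_\Sigma + \Vert\Pi_h^{\div}\bar{\bu}-\bar{\bu}_h\Vert_{\bar V} + \Vert Q_hp-p_h\Vert \le C\bigl(\Vert\sig-\sig_I\Vert_\Sigma + \Vert\bu-\Pi_h^{\div}\bu\Vert + \Vert\bw-\Pi_h^{\div}\bw\Vert\bigr).
$$
A triangle inequality on the $\sig$ component, followed by taking the infimum over $\sig_I$, yields~\eqref{eq:near_best_approx1}. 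The constant is independent of $\lambda$ and $h$, because $c_1$ and $C_0$ are.

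\textbf{Step 3: rates~\eqref{eq:near_best_approx2}.}
\begin{itemize}
\item Take $\sig_I = \Pi_h\sig$ (or an $H(\div)$-quasi-interpolant if $\sig$ lacks $H^2$ regularity). Then $\Vert\sig-\Pi_h\sig\Vert \lesssim h^{k}|\sig|_k$. Using the commuting property~\eqref{commu_Idnetity}, $\Vert\div(\sig-\Pi_h\sig)\Vert = \Vert\div\sig-\Pi_h^{\div}\div\sig\Vert \lesssim h^k|\div\sig|_k$, since $\text{BDM}_{k-1}$ approximates in $L^2$ with order $k$.
\item The BDM interpolation estimate $\Vert\bv-\Pi_h^{\div}\bv\Vert\lesssim h^k|\bv|_k$ handles $\bu$ and $\bw$.
\end{itemize}

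\textbf{Step 4: the $L^2$ estimate~\eqref{eq:near_best_approx3}.} Write
$$
\bar{\bu}-\bar{\bu}_h = (\bar{\bu}-\Pi_h^{\div}\bar{\bu}) + (\Pi_h^{\div}\bar{\bu}-\bar{\bu}_h).
$$
The second part is bounded in $L^2$ by its $\bar V$ norm, hence by~\eqref{eq:near_best_approx2}. The first part is bounded by the BDM interpolation estimate.

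\textbf{Main obstacle.} The delicate point is interpolating $\sig$ with only $|\sig|_k$ and $|\div\sig|_k$ regularity. The canonical $\Pi_h$ needs vertex values and the $\div\boldsymbol\tau$ normal moments, so the smoothness required by the DOF~\eqref{DOF0}--\eqref{DOF3} must be implied by the assumed seminorms. I would first assume enough smoothness for $\Pi_h$ to be defined, and cite the interpolation estimates from~\cite{ChenHuang2022}.
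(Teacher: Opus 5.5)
Your proposal is correct and follows essentially the same route as the paper. The choices $\bar{\bu}_I=\Pi_h^{\div}\bar{\bu}$ and $p_I=Q_hp$ in the error equations of Theorem~\ref{thm:near_best_approx} annihilate the pressure, divergence and $S$-terms via $\div V_h=P_h$ and $\div\Pi_h^{\div}=Q_h\div$, and the discrete inf-sup condition~\eqref{inf-sup-dis}, Cauchy--Schwarz and the triangle inequality then give~\eqref{eq:near_best_approx1}, with~\eqref{eq:near_best_approx2} and~\eqref{eq:near_best_approx3} following from interpolation estimates and the triangle inequality exactly as in the paper (your use of~\eqref{commu_Idnetity} for the $\div\sig$ part only makes explicit what the paper calls the approximation property of $\Sigma_h$).
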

\begin{proof}
In the proof of Theorem \ref{thm:near_best_approx}, we choose $\bar{\bu}_I=\Pi^{\div}_h \bar{\bu}=(\Pi^{\div}_h \bu, \Pi^{\div}_h \bw)$ and $p_I=Q_h p$, noting that $\div V_h=P_h$ and $\div \Pi_h^{\div} \bu=Q_h\div \bu$, to obtain
\begin{align*}
&(p-p_I, \div \bv_h)=0, (p-p_I, \div \bz_h)=0, \\
&(\div(\bu-\bu_I), q_h)=0, (\div(\bw-\bw_I), q_h)=0, \\
&(S(p-p_I), q_h)=0,
\end{align*} 
which implies for any $ \sig_I\in \Sigma_h$ we have
\begin{align*}
\Vert \sig_I - \sig_h \Vert_{H(\div,\Omega;\mathbb{S})} + \Vert \bar{\bu}_I - \bar{\bu}_h \Vert_{\bar{V}} +  \Vert p_I - p_h \Vert 
\le &
\tilde{C} \Big\lvert
\frac{(A (\sig_I-\sig), \bm \tau_h) + (\bu-\bu_I,\div \bm \tau_h)}
{\Vert \btau_h \Vert_{H(\div,\Omega;\mathbb{S})}+ \Vert \bar{\bv}_h \Vert_{\bar{V}} + \Vert q_h \Vert} \\
&+  \frac{(\div (\sig_I-\sig),\bv_h) -(R_p^{-1}  (\bw_I-\bw), \bz_h) }
{\Vert \btau_h \Vert_{H(\div,\Omega;\mathbb{S})}+ \Vert \bar{\bv}_h \Vert_{\bar{V}} + \Vert q_h \Vert}\Big\lvert.
\end{align*}
{
Then by the triangle inequality and Cauchy-Schwarz inequality, we obtain the estimate~\eqref{eq:near_best_approx1}.
The estimate~\eqref{eq:near_best_approx2} follows with the help of the approximation property of $\Sigma_h$ and the interpolation error estimate for $\Pi_h^{\div} \bar{\bu}$.} 
Finally, noting that 
\begin{align}
\Vert \bar{\bu} - \bar{\bu}_h \Vert\le \Vert \bar{\bu} - \Pi^{\div}_h\bar{\bu} \Vert+\Vert \Pi^{\div}_h\bar{\bu} - \bar{\bu}_h \Vert\le  \Vert \bar{\bu} - \Pi^{\div}_h\bar{\bu} \Vert+\Vert \Pi^{\div}_h\bar{\bu} - \bar{\bu}_h \Vert_{\bar{V}},
\end{align} 
we infer~\eqref{eq:near_best_approx3}.
\end{proof}

\subsection{Post-processing and improved error estimate for pressure}\label{subsec:fem_post_processing} 
{
Following the ideas discussed in \cite{arnold1985mixed,bank2019superconvergent,bramble1989local,brandts1994superconvergence,stenberg1991postprocessing,ma2021superconvergence}, we can improve the pressure approximation by a simple post-processing procedure, which increases its convergence order and makes it comparable to that of the tensor and flux variables. 
}
Let $P_h^{\star}$ be the space of discontinuous piecewise polynomials which are one order higher than those defining $P_h$. Let $Q_0, Q_h^\star, Q_0^\perp$ be the $L^2$ projections onto the spaces $P_0, P_h^{\star}$ and $P_0^\perp$, where $P_0$ is the space of piecewise constant functions and $P_0^\perp$ the space orthogonal to $P_0$ such that $P_h^{\star}=P_0\oplus P_0^\perp$. 
Consider now the following problem: Find $p_h^\star\in P_h^{\star}$ such that 
\begin{align}
(\nabla_h p_h^\star, \nabla_h q_h)&=-(R_p^{-1}\bw_h, \nabla_h q_h), ~~\forall~~q_h\in P_0^\perp, \label{post1}\\
(p_h^\star, q_{h,0})&=(p_h, q_{h,0}),~~\forall~~q_h\in P_0\label{post2}.
\end{align}
{
It is easy to see that $p_h^\star,$ is well-defined by checking that
\eqref{post1}-\eqref{post2} is a nonsingular linear system. The following error analysis of this post-processing procedure is nearly the same as that in existing literature mentioned in this section. Nonetheless, we include a detailed proof in order to keep our presentation self-contained.}
\begin{theorem}\label{thm:post}
Let $r-1$ be the polynomial degree of $P_h^{\star}$ and $p_h^* \in P_h^{\star}$ be the solution of \eqref{post1}-\eqref{post2}. Then we have
\begin{align}
\Vert p-p_h^\star\Vert\le C h^r \left(|\sig|_r+|\div\sig|_r+|\bu|_r+|\bw|_r+|p|_r\right).
\end{align}
\end{theorem}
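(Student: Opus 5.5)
The plan is to split the error with the $L^2$ projection $Q_h^\star$ and then treat the piecewise constant and the higher-order parts of the discrete error separately. We use three facts: the relations \eqref{post1}--\eqref{post2}, Darcy's law \eqref{Biot3_strong} for the exact solution, and the superconvergence of $\Vert Q_hp-p_h\Vert$ from Theorem~\ref{thm:near_best_approx0}. Note that $P_h^\star$ has degree $(k-2)+1=k-1$, so $r=k$; we only need $r\le k+1$ below. Write
\[
p-p_h^\star=(p-Q_h^\star p)+e,\qquad e:=Q_h^\star p-p_h^\star\in P_h^\star,\qquad e=Q_0e+Q_0^\perp e .
\]
The first term is bounded by standard approximation: $\Vert p-Q_h^\star p\Vert\le Ch^r|p|_r$.

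\emph{Piecewise constant part.} Since $P_0\subset P_h\subset P_h^\star$, we have $Q_0Q_h^\star=Q_0=Q_0Q_h$. Together with \eqref{post2}, which says $Q_0p_h^\star=Q_0p_h$, this gives
\[
Q_0e=Q_0(p-p_h)=Q_0(Q_hp-p_h),\qquad \Vert Q_0e\Vert\le \Vert Q_hp-p_h\Vert .
\]
By \eqref{eq:near_best_approx2}, the right-hand side is bounded by $Ch^k(|\sig|_k+|\div\sig|_k+|\bu|_k+|\bw|_k)$, which is of order $h^r$. This is the step that carries the superconvergence: it works only because Theorem~\ref{thm:near_best_approx0} controls $Q_hp-p_h$, rather than $p-p_h$, at order $h^k$. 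For $r=k$ this is exactly the required order.

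\emph{Mean-free part.} On each element, $Q_0^\perp e$ has zero mean, so the elementwise Poincaré inequality gives $\Vert Q_0^\perp e\Vert\le Ch\Vert\nabla_h e\Vert$; here $\nabla_h$ is the broken gradient, and $\nabla_h e=\nabla_hQ_0^\perp e$. To bound $\nabla_h e$, take any $q\in P_0^\perp$. Using \eqref{post1} and Darcy's law $\nabla p=-R_p^{-1}\bw$ (valid elementwise under the assumed regularity), we get
\[
(\nabla_h e,\nabla_h q)=(\nabla_h(Q_h^\star p-p),\nabla_h q)+(\nabla p+R_p^{-1}\bw_h,\nabla_h q)
=(\nabla_h(Q_h^\star p-p),\nabla_h q)-(R_p^{-1}(\bw-\bw_h),\nabla_h q).
\]
Choosing $q=Q_0^\perp e$ and applying Cauchy--Schwarz yields
\[
\Vert\nabla_h e\Vert\le \Vert\nabla_h(p-Q_h^\star p)\Vert+R_p^{-1}\Vert\bw-\bw_h\Vert\le Ch^{r-1}|p|_r+C\Vert\bw-\bw_h\Vert .
\]
By \eqref{eq:near_best_approx3}, $\Vert\bw-\bw_h\Vert\le Ch^k(|\sig|_k+|\div\sig|_k+|\bu|_k+|\bw|_k)$. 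Hence $\Vert Q_0^\perp e\Vert\le C(h^r|p|_r+h^{k+1}(\cdots))$, and since $r\le k+1$ this is of order $h^r$.

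Collecting the three bounds and using $r=k$ to write the seminorms with index $r$ gives the assertion. The only delicate points are the commuting identity $Q_0Q_h^\star=Q_0Q_h$ and the use of the exact Darcy law inside the broken gradient form. Both should be checked carefully, including near $\Gamma_{p,D}$; this causes no difficulty because the test functions in \eqref{post1} are discontinuous and no integration by parts is needed.
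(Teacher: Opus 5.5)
Your proof is correct and follows the paper's argument essentially step by step. Both reduce to $Q_h^\star p-p_h^\star$, control its piecewise-constant part via \eqref{post2} and the superconvergence bound \eqref{eq:near_best_approx2} for $\Vert Q_hp-p_h\Vert$, and control the mean-free part by subtracting \eqref{post1} from the exact Darcy law, testing with $Q_0^\perp(Q_h^\star p-p_h^\star)$, and combining an elementwise Poincar\'e inequality with \eqref{eq:near_best_approx3}.

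One small slip in exposition: your aside that only $r\le k+1$ is needed is not accurate, because the piecewise-constant part yields $O(h^k)$ and hence needs $r\le k$. This is harmless, since $r=k$ by the definition of $P_h^\star$.
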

\begin{proof}
To prove $\Vert p-p_h^\star\Vert\le C h^r$, it is sufficient to prove $\Vert Q_h^\star p-p_h^\star\Vert\le C h^r$ because $\Vert Q_h^\star p-p\Vert\le C h^r$. 
Noting that $Q_h^\star =Q_0+Q_0^\perp$, we have 
\begin{align}
Q_h^\star (p-p_h^\star)=Q_0(p-p_h^\star)+Q_0^\perp(p-p_h^\star)\in P_0+P_0^\perp. 
\end{align}
From the facts that $Q_0Q_hp=Q_0p, Q_0p_h^\star=Q_0p_h$, we have 
$$
Q_0(p-p_h^\star)=Q_0(Q_hp-p_h). 
$$
Recall that from \eqref{eq:near_best_approx2}, we already have $\Vert Q_hp-p_h\Vert\le Ch^r$. Hence we only need to prove 
\begin{align}\label{perp1}
\Vert Q_0^\perp(p-p_h^\star) \Vert \le Ch^r.
\end{align}
From \eqref{Biot3_strong}, we have 
\begin{align}
\nabla p&=-R_p^{-1}  \bw \end{align}
 and further 
 \begin{align}\label{post:true}
(\nabla_h p, \nabla_h q_h)&=-(R_p^{-1}\bw, \nabla_h q_h), ~~\forall~~q_h\in P_0^\perp.
\end{align}
Therefore, by subtracting \eqref{post1} from \eqref{post:true}, we obtain
$$
(\nabla_h (p-p_h^\star), \nabla_h q_h)=-(R_p^{-1}(\bw-\bw_h), \nabla_h q_h), ~~\forall~~q_h\in P_0^\perp.
$$
Using the identity
\begin{align}
p-p_h^\star=(p-Q_h^\star p)+(Q_h^\star p-p_h^\star)=(p-Q_h^\star p)+Q_0(Q_h^\star p-p_h^\star)+Q_0^\perp(Q_h^\star p-p_h^\star),
\end{align}
we see
$$
(\nabla_h (Q_0^\perp (p-p_h^\star)), \nabla_h q_h)=-(\nabla_h (p-Q_h^\star p), \nabla_h q_h)-(R_p^{-1}(\bw-\bw_h), \nabla_h q_h), ~~\forall~~q_h\in P_0^\perp.
$$
Hence, we obtain 
\begin{align}
\Vert \nabla_h (Q_0^\perp (p-p_h^\star))\Vert \le \Vert \nabla_h (p-Q_h^\star p)\Vert +R^{-1}_p\Vert \bw-\bw_h\Vert.
\end{align}
Noting that 
$$
\Vert Q_0^\perp (p-p_h^\star)\Vert \le  C h \Vert \nabla_h (Q_0^\perp (p-p_h^\star))\Vert, 
$$
we conclude 
$$
\Vert Q_0^\perp (p-p_h^\star)\Vert \le C h \Vert \nabla_h (Q_0^\perp (p-p_h^\star))\Vert \le C h\Vert \nabla_h (p-Q_h^\star p)\Vert +C h R^{-1}_p\Vert \bw-\bw_h\Vert. 
$$
By the approximation property of $Q_h^\star $ and the approximation estimate \eqref{eq:near_best_approx3}, we finally get 
$$
\Vert Q_0^\perp (p-p_h^\star)\Vert  \le C h\Vert \nabla_h (p-Q_h^\star p)\Vert +C h R^{-1}_p\Vert \bw-\bw_h\Vert\le Ch^r+Ch\cdot h^r,
$$
which implies \eqref{perp1}.
\end{proof}

\section{Conclusions}\label{sec:num_res} 

We have presented a four-field formulation of the time-step equations of the quasi-static Biot's model of consolidation obtained after semi-discretization in time by some implicit time integrator. The physical quantities of interest in our model are the effective stress tensor borne by the solid skeleton, the solid displacement, the fluid flux, and the pore fluid pressure. The model in strong form consists of the constitutive equation of linear elasticity (Hooke's law in compliance form) and the equations describing the balance of linear momentum, Darcy's law for fluid flow through a porous medium, and the conservation of mass. We have proposed a four-field variational formulation in which the above-mentioned physical quantities of interest appear directly as the four fields thus providing a suitable starting point for developing discrete numerical models that are momentum- and mass conservative. Utilizing a novel Hilbert space, we have proven this intrinsic variational problem to be well-posed. Next, we have exploited a family of  $H({\rm div} \, {\rm div}) \cap H({\rm div})$-conforming symmetric tensor-valued finite elements to construct a mixed-mixed discretization that preserves the angular momentum as well as the fluid mass pointwise, i.e., in a strong sense. Further, we have proven the well-posedness of the discrete model resulting from application of this mixed-mixed method to the new four-field formulation. Finally, we have derived optimal a priori error estimates for the numerical approximations of all four fields obtained from this discrete model.

\textbf{Acknowledgements.} The first author Qingguo Hong acknowledges the support from NSF Grant NSF DMS-2419033. 

%\section{Numerical results}\label{sec:num_res} 
%{
%We suppose that the domain $\Omega$ is the unit square in $\mathbb{R}^2$, and during the discretization, it has
%been partitioned as bisections of $2N^2$ triangles with mesh size $h= 1/N$.  All the numerical tests
%included in this section have been carried out in FEniCS. We use the basis of the finite space $\Sigma_h$ provided in \cite{HuMaZhang2021}. 
% We consider the simplest case of a system with only one pressure and one flux, namely, Biot's consolidation model. We solve system \eqref{eq:Biot_strong} for
%
%$$
%\boldsymbol{f}=\left[\begin{array}{r}
%-\left(2 y^3-3 y^2+y\right)\left(12 x^2-12 x+2\right)-(x-1)^2 x^2(12 y-6)+900(y-1)^2 y^2\left(4 x^3-6 x^2+2 x\right) \\
%\left(2 x^3-3 x^2+x\right)\left(12 y^2-12 y+2\right)+(y-1)^2 y^2(12 x-6)+900(x-1)^2 x^2\left(4 y^3-6 y^2+2 y\right)
%\end{array}\right]
%$$
%
%and $g=R_1\left(\frac{\partial \phi_2}{\partial x}+\frac{\partial \phi_2}{\partial y}\right)-\alpha_{p_1}\left(\phi_2-1\right)$, where
%$$
% \phi_1=(x-1)^2(y-1)^2 x^2 y^2, \quad \phi_2=900(x-1)^2(y-1)^2 x^2 y^2,(x, y) \in \Omega .
%$$
%
%
%Then, the exact solution of system \eqref{eq:Biot_strong} with boundary conditions $\left.\boldsymbol{u}\right|_{\partial \Omega}=0,\left.\boldsymbol{v} \cdot \boldsymbol{n}\right|_{\partial \Omega}=0$ is given by
%
%$$
%\boldsymbol{u}=\left(\frac{\partial \phi_1}{\partial y},-\frac{\partial \phi_1}{\partial x}\right), p=\phi_2-1, \boldsymbol{v}=-R_1 \nabla p, \text { where } p \in L_0^2(\Omega) .
%$$
%}
%\subsection{...}
%
%\subsection{...}
%
%\subsection{}

\bibliographystyle{plain}
\bibliography{HKL_4field}

\begin{thebibliography}{10}

\bibitem{nedelec1980mixed}
Mixed finite elements in $\mathcal{R}^3$, author={N{\'e}d{\'e}lec,
  Jean-Claude}, journal={Numerische Mathematik}, volume={35}, number={3},
  pages={315--341}, year={1980}, publisher={Springer}.

\bibitem{Aichele2021fluids}
J.~Aichele and S.~Catheline.
\newblock Fluids alter elasticity measurements: Porous wave propagation
  accounts for shear wave dispersion in elastography.
\newblock {\em Front. Phys.}, 9, 2021.

\bibitem{ArnoldBrezzi1985mixed}
D.~N. Arnold and F.~Brezzi.
\newblock Mixed and nonconforming finite element methods: implementation,
  postprocessing and error estimates.
\newblock {\em RAIRO Mod\'{e}l. Math. Anal. Num\'{e}r.}, 19(1):7--32, 1985.

\bibitem{arnold1985mixed}
Douglas~N Arnold and Franco Brezzi.
\newblock Mixed and nonconforming finite element methods: implementation,
  postprocessing and error estimates.
\newblock {\em ESAIM: Mathematical Modelling and Numerical Analysis},
  19(1):7--32, 1985.

\bibitem{arnold2006finite}
Douglas~N Arnold, Richard~S Falk, and Ragnar Winther.
\newblock Finite element exterior calculus, homological techniques, and
  applications.
\newblock {\em Acta numerica}, 15:1--155, 2006.

\bibitem{Babuska1971error}
I.~Babu{\v s}ka.
\newblock Error-bounds for finite element method.
\newblock {\em Numer. Math.}, 16:322--333, 1970/71.

\bibitem{Babuska1992locking}
Ivo Babu\v{s}ka and Manil Suri.
\newblock Locking effects in the finite element approximation of elasticity
  problems.
\newblock {\em Numer. Math.}, 62:439--463, 1992.

\bibitem{bank2019superconvergent}
Randolph~E Bank and Yuwen Li.
\newblock Superconvergent recovery of raviart--thomas mixed finite elements on
  triangular grids.
\newblock {\em Journal of Scientific Computing}, 81(3):1882--1905, 2019.

\bibitem{Barenblatt1960basic}
G.I. Barenblatt, G.I. Zheltov, and I.N. Kochina.
\newblock Basic concepts in the theory of seepage of homogeneous liquids in
  fissured rocks [strata].
\newblock {\em J. Appl. Math. Mech.}, 24(5):1286--1303, 1960.

\bibitem{Biot1941general}
M.A. Biot.
\newblock General theory of three-dimensional consolidation.
\newblock {\em J. Appl. Phys.}, 12(2):155--164, 1941.

\bibitem{Biot1956waves1}
M.A. Biot.
\newblock Theory of propagation of elastic waves in a fluid-saturated porous
  solid. {I}. {L}ow-frequency range.
\newblock {\em J. Acoust. Soc. Amer.}, 28:168--178, 1956.

\bibitem{Biot1956waves2}
M.A. Biot.
\newblock Theory of propagation of elastic waves in a fluid-saturated porous
  solid. {II}. {H}igher frequency range.
\newblock {\em J. Acoust. Soc. Amer.}, 28:179--191, 1956.

\bibitem{Biot1962acoust}
M.A. Biot.
\newblock Generalized theory of acoustic propagation in porous dissipative
  media.
\newblock {\em J. Acoust. Soc. Amer.}, 34:1254--1264, 1962.

\bibitem{Biot1962Mech}
M.A. Biot.
\newblock Mechanics of deformation and acoustic propagation in porous media.
\newblock {\em J. Appl. Phys.}, 33(4):1482--1498, April 1962.

\bibitem{Boffi2013mixed}
D.~Boffi, F.~Brezzi, and M.~Fortin.
\newblock {\em Mixed finite element methods and applications}, volume~44 of
  {\em Springer Ser. Comput. Math.}
\newblock Springer, Heidelberg, 2013.

\bibitem{Bothl2022iterative}
J.W. Both, N.A. Barnafi, F.A. Radu, P.~Zunino, and A.~Quarteroni.
\newblock Iterative splitting schemes for a soft material poromechanics model.
\newblock {\em Comput. Methods Appl. Mech. Engrg.}, 388:114183, 2022.

\bibitem{bramble1989local}
James~H Bramble and Jinchao Xu.
\newblock A local post-processing technique for improving the accuracy in mixed
  finite-element approximations.
\newblock {\em SIAM journal on numerical analysis}, 26(6):1267--1275, 1989.

\bibitem{brandts1994superconvergence}
Jan~H Brandts.
\newblock Superconvergence and a posteriori error estimation for triangular
  mixed finite elements.
\newblock {\em Numerische Mathematik}, 68(3):311--324, 1994.

\bibitem{Carlson1973Lin}
D.E. Carlson.
\newblock {\em Linear Thermoelasticity}, pages 297--345.
\newblock Springer Berlin Heidelberg, Berlin, Heidelberg, 1973.

\bibitem{chen2020finite}
Long Chen and Xuehai Huang.
\newblock Finite elements for divdiv-conforming symmetric tensors.
\newblock {\em arXiv preprint arXiv:2005.01271}, 2020.

\bibitem{chen2022finiteele}
Long Chen and Xuehai Huang.
\newblock A finite element elasticity complex in three dimensions.
\newblock {\em Mathematics of Computation}, 91(337):2095--2127, 2022.

\bibitem{ChenHuang2022}
Long Chen and Xuehai Huang.
\newblock Finite elements for div- and divdiv-conforming symmetric tensors in
  arbitrary dimension.
\newblock {\em SIAM Journal on Numerical Analysis}, 60(4):1932--1961, 2022.

\bibitem{chen2022finite}
Long Chen and Xuehai Huang.
\newblock Finite elements for divdiv conforming symmetric tensors in three
  dimensions.
\newblock {\em Mathematics of Computation}, 91(335):1107--1142, 2022.

\bibitem{ChenHuang2025}
Long Chen and Xuehai Huang.
\newblock {A new div-div-conforming symmetric tensor finite element space with
  applications to the biharmonic equation}.
\newblock {\em Mathematics of Computation}, 94(351):33--72, 2025.

\bibitem{chen2025new}
Long Chen and Xuehai Huang.
\newblock A new div-div-conforming symmetric tensor finite element space with
  applications to the biharmonic equation.
\newblock {\em Mathematics of Computation}, 94(351):33--72, 2025.

\bibitem{Cheng2016Poro}
Alexander H.-D. Cheng.
\newblock {\em Poroelasticity}, volume~27 of {\em Theory and Applications of
  Transport in Porous Media}.
\newblock Springer, [Cham], 2016.

\bibitem{CockburnGopalakrishnan2004characterization}
Bernardo Cockburn and Jayadeep Gopalakrishnan.
\newblock A characterization of hybridized mixed methods for second order
  elliptic problems.
\newblock {\em SIAM J. Numer. Anal.}, 42(1):283--301, 2004.

\bibitem{costabel2010bogovskiui}
Martin Costabel and Alan McIntosh.
\newblock On bogovski{\v{\i}} and regularized poincar{\'e} integral operators
  for de rham complexes on lipschitz domains.
\newblock {\em Mathematische Zeitschrift}, 265(2):297--320, 2010.

\bibitem{Dai2014comparison}
Z.~Dai, Y.~Peng, H.A. Mansy, R.H. Sandler, and T.J. Royston.
\newblock Comparison of poroviscoelastic models for sound and vibration in the
  lungs.
\newblock {\em J. Vib. Acoust.}, 136(5):0510121--5101211, 2014.

\bibitem{Guo_etal2018subject-specific}
L.~Guo, J.C. Vardakis, T.~Lassila, M.~Mitolo, N.~Ravikumar, D.~Chou, M.~Lange,
  A.~Sarrami-Foroushani, B.J. Tully, Z.A. Taylor, S.~Varma, A.~Venneri, A.F.
  Frangi, and Y.~Ventikos.
\newblock Subject-specific multi-poroelastic model for exploring the risk
  factors associated with the early stages of {A}lzheimer's disease.
\newblock {\em Interface Focus}, 8(1):20170019, 2018.

\bibitem{Haga2012causes}
Joachim~Berdal Haga, Harald Osnes, and Hans~Petter Langtangen.
\newblock On the causes of pressure oscillations in low‐permeable and
  low‐compressible porous media.
\newblock {\em Int. J. Numer. Anal. Methods Geomech.}, 36(12):1507--1522, 2012.

\bibitem{HongKraus2018parameter}
Q.~Hong and J.~Kraus.
\newblock Parameter-robust stability of classical three-field formulation of
  {B}iot's consolidation model.
\newblock {\em Electron. Trans. Numer. Anal.}, 48:202--226, 2018.

\bibitem{HongKraus2018}
Qingguo Hong and Johannes Kraus.
\newblock Parameter-robust stability of classical three-field formulation of
  biot's consolidation model.
\newblock {\em Electron. Trans. Numer. Anal.}, 48:202--226, 2018.

\bibitem{HuMaZhang2021}
Jun Hu, Rui Ma, and Min Zhang.
\newblock A family of mixed finite elements for the biharmonic equations on
  triangular and tetrahedral grids.
\newblock {\em Science China Mathematics}, 64(12):2793--2816, 2021.

\bibitem{hu2025optimality}
Jun Hu, Rui Ma, and Min Zhang.
\newblock Optimality of adaptive $h(\div\div)$ mixed finite element methods for
  the kirchhoff-love plate bending problem.
\newblock {\em arXiv preprint arXiv:2508.09008}, 2025.

\bibitem{HuZhang2015}
Jun Hu and Shangyou Zhang.
\newblock A family of symmetric mixed finite elements for linear elasticity on
  tetrahedral grids.
\newblock {\em Science China Mathematics}, 58:297--307, 2015.

\bibitem{KanschatRiviere2018finite}
G.~Kanschat and B.~Riviere.
\newblock A finite element method with strong mass conservation for {B}iot's
  linear consolidation model.
\newblock {\em Journal of Scientific Computing}, 77(3):1762--1779, 2018.

\bibitem{KrLeLyOsSc2022}
Johannes Kraus, Philip~L. Lederer, Maria Lymbery, Kevin Osthues, and Joachim
  Sch\"{o}berl.
\newblock Hybridized discontinuous {G}alerkin/hybrid mixed methods for a
  multiple network poroelasticity model with application in biomechanics.
\newblock {\em SIAM J. Sci. Comput.}, 45(6):B802--B827, 2023.

\bibitem{Lee2017parameter}
J.~Lee, K.-A. Mardal, and R.~Winther.
\newblock Parameter-robust discretization and preconditioning of {B}iot's
  consolidation model.
\newblock {\em SIAM J. Sci. Comput.}, 39:A1--A24, 2017.

\bibitem{Lee2016robust}
J.J. Lee.
\newblock Robust error analysis of coupled mixed methods for {B}iot's
  consolidation model.
\newblock {\em J. Sci. Comput.}, 69:610--632, 2016.

\bibitem{Lee2023analysis}
J.J. Lee.
\newblock Analysis and preconditioning of parameter-robust finite element
  methods for {B}iot's consolidation model.
\newblock {\em BIT}, 63(3):Paper No. 42, 23, 2023.

\bibitem{lee2018mixed}
J.J. Lee, E.~Piersanti, K.-A. Mardal, and M.E. Rognes.
\newblock A mixed finite element method for nearly incompressible
  multiple-network poroelasticity.
\newblock {\em SIAM J. Sci. Comput.}, 41(2):A722--A747, 2019.

\bibitem{Lehrenfeld2016high}
Christoph Lehrenfeld and Joachim Sch\"{o}berl.
\newblock High order exactly divergence-free hybrid discontinuous {G}alerkin
  methods for unsteady incompressible flows.
\newblock {\em Comput. Methods Appl. Mech. Engrg.}, 307:339--361, 2016.

\bibitem{ma2021superconvergence}
Limin Ma.
\newblock Superconvergence of discontinuous galerkin methods for elliptic
  boundary value problems.
\newblock {\em Journal of Scientific Computing}, 88(3):62, 2021.

\bibitem{Ong2009Deformation}
Rowena~E. Ong, Courtenay~L. Glisson, S.~Duke Herrell, Michael~I. Miga, and
  Robert Galloway.
\newblock {A deformation model for non-rigid registration of the kidney}.
\newblock In Michael~I. Miga and Kenneth~H. Wong, editors, {\em Medical Imaging
  2009: Visualization, Image-Guided Procedures, and Modeling}, volume 7261,
  page 72613A. International Society for Optics and Photonics, SPIE, 2009.

\bibitem{oyarzua2016locking}
R.~Oyarz\'ua and R.~Ruiz-Baier.
\newblock Locking-free finite element methods for poroelasticity.
\newblock {\em SIAM J. Numer. Anal.}, 54:2951--2973, 2016.

\bibitem{Phillips2009overcoming}
Phillip~Joseph Phillips and Mary~F. Wheeler.
\newblock Overcoming the problem of locking in linear elasticity and
  poroelasticity: an heuristic approach.
\newblock {\em Comput. Geosci.}, 13:5--12, 2009.

\bibitem{Pitre2017}
John~J. Pitre and Joseph~L. Bull.
\newblock {\em Imaging the Mechanical Properties of Porous Biological Tissue},
  pages 1--27.
\newblock Springer International Publishing, Cham, 2017.

\bibitem{Showalter2000Diff}
R.E. Showalter.
\newblock Diffusion in poro-elastic media.
\newblock {\em J. Math. Anal. Appl.}, 251, November 2000.

\bibitem{stenberg1991postprocessing}
Rolf Stenberg.
\newblock Postprocessing schemes for some mixed finite elements.
\newblock {\em ESAIM: Mathematical Modelling and Numerical Analysis},
  25(1):151--167, 1991.

\bibitem{Vardakis2016investigating}
J.C. Vardakis, D.~Chou, B.J. Tully, C.C. Hung, T.H. Lee, P.H. Tsui, and
  Y.~Ventikos.
\newblock Investigating cerebral oedema using poroelasticity.
\newblock {\em Med. Eng. Phys.}, 38(1):48--57, 2016.

\bibitem{Zenisek1984existence}
Alexander \v{Z}en\'{i}\v{s}ek.
\newblock The existence and uniqueness theorem in {B}iot's consolidation
  theory.
\newblock {\em Aplikace matematiky}, 29(3):194--211, 1984.

\bibitem{Zenisek1984finite}
Alexander \v{Z}en\'{i}\v{s}ek.
\newblock Finite element methods for coupled thermoelasticity and coupled
  consolidation of clay.
\newblock {\em RAIRO Anal. Num\'er.}, 18(2):183--205, 1984.

\bibitem{Yi2014convergence}
S.-Y. Yi.
\newblock Convergence analysis of a new mixed finite element method for
  {B}iot's consolidation model.
\newblock {\em Numer. Methods Partial Differ. Equ.}, 30(4):1189--1210, 2014.

\bibitem{Zienkiewicz1982Basic}
O.C. Zienkiewicz.
\newblock Basic formulation of static and dynamic behaviours of soil and other
  porous media.
\newblock {\em Appl. Math. Mech.}, 3(4):457--468, 1982.

\end{thebibliography}

\end{document}